\newtheorem{lemma}{Lemma}
\newcommand{\tabsize}{\fontsize{9}{9.5pt}\selectfont}
\begin{document}

\title{A geometric multigrid method for isogeometric compatible discretizations of the generalized Stokes and Oseen problems}

\author{Christopher Coley,\footnote{Ann and H.J. Smead Aerospace Engineering Sciences, 426 UCB, University of Colorado Boulder 80309, USA} \footnote{ E-mail: christopher.coley@colorado.edu} \ Joseph Benzaken\footnote{Department of Applied Mathematics, 526 UCB, University of Colorado, Boulder, CO 80309, USA}, \ John A. Evans$^*$}

\maketitle

\begin{abstract}
In this paper, we present a geometric multigrid methodology for the solution of matrix systems associated with isogeometric compatible discretizations of the generalized Stokes and Oseen problems.  The methodology provably yields a pointwise divergence-free velocity field independent of the number of pre-smoothing steps, post-smoothing steps, grid levels, or cycles in a V-cycle implementation.  The methodology relies upon Scwharz-style smoothers in conjunction with specially defined overlapping subdomains that respect the underlying topological structure of the generalized Stokes and Oseen problems.  Numerical results in both two- and three-dimensions demonstrate the robustness of the methodology through the invariance of convergence rates with respect to grid resolution and flow parameters for the generalized Stokes problem as well as the generalized Oseen problem provided it is not advection-dominated.
\\\\
\noindent Keywords: Geometric multigrid; Isogeometric compatible discretizations; Isogeometric divergence-conforming discretizations; Generalized Stokes flow; Generalized Oseen flow; Overlapping Schwarz smoothers

\end{abstract}

\section{Introduction}

Isogeometric compatible discretizations\footnote{Depending on context, isogeometric compatible discretizations may also be referred to as isogeometric discrete differential forms, structure-preserving discretizations, divergence-conforming discretizations, or curl-conforming discretizations.} have recently arisen as an attractive candidate for the spatial discretization of fluid flow problems \cite{Buffa11,Evans12,Evans13_1,Evans13_2,Evans13_3,Evans16,Sarmiento17}.  These discretizations comprise a discrete Stokes complex \cite{Evans11,Falk13} and may be interpreted as smooth generalizations of Raviart-Thomas-N\'{e}d\'{e}lec finite elements \cite{Raviart77,Nedelec80}.  When applied to incompressible flow problems, isogeometric compatible discretizations produce pointwise divergence-free velocity fields and hence exactly satisfy mass conservation.  As a result, they preserve the balance law structure of the incompressible Navier-Stokes equations, and in particular, they properly conserve mass, linear and angular momentum, energy, vorticity, enstrophy (in the two-dimensional setting), and helicity (in the three-dimensional setting) in the inviscid limit \cite{Evans13_3}.  Isogeometric compatible discretizations have recently been applied to Cahn-Hilliard flow \cite{Vignal15}, turbulent flow \cite{VanOpstal17}, and fluid-structure interaction \cite{Kamensky17} where improved results were attained in comparison with state-of-the-art discretization procedures.

Despite the promise of isogeometric compatible discretizations, very little research has been conducted so far in the area of efficient linear solvers.  In fact, only the performance of Krylov subspace methods in conjunction with block preconditioners has been investigated in prior work \cite{Cortes15,Cortes17}.  The objective of the current work is to introduce an optimally efficient linear solution procedure for isogeometric compatible discretizations of the generalized Stokes and Oseen problems.  It should be noted that there are many different candidates in this regard.  For instance, there exist efficient physics-based splitting methods such as the inexact Uzawa algorithm \cite{Elman94}.  However, these techniques rely on suitable Schur complement approximations which can be difficult to design in the context of generalized Oseen flow.  Alternatively, one can employ a multigrid method in conjunction with a Vanka smoother \cite{Vanka86}, a Uzawa smoother \cite{Gaspar14}, or a Braess-Sarazin smoother \cite{Braess97}.  While these techniques generally do not require accurate Schur complement approximations, they typically involve specially tuned relaxation parameters.  Perhaps more concerning is the fact that all of the aforementioned procedures do not return a pointwise-divergence free velocity field unless the linear solver is fully converged.

To overcome the issues associated with the aforementioned linear solution procedures, we present a geometric multigrid methodology which relies upon Schwarz-style smoothers \cite{Dolean15} in conjunction with specially defined overlapping subdomains that respect the underlying topological structure of the generalized Stokes and Oseen problems.  This methodology is inspired by multigrid and auxiliary space preconditioning methodologies for divergence-conforming discontinuous Galerkin formulations of Stokes flow \cite{AyusodeDios2014,Kanschat15,Kanschat16,Kanschat17} and multigrid methodologies for compatible finite element discretizations of Darcy and Maxwell problems \cite{Hiptmair98,Arnold00}.  We prove that our methodology yields a pointwise divergence-free velocity field independent of the number of pre-smoothing steps, post-smoothing steps, grid levels, or cycles in a V-cycle implementation.  We also demonstrate by numerical example that our methodology is optimally efficient and robust in that it exhibits convergence rates independent of the grid resolution and flow parameters for the generalized Stokes problem as well as the generalized Oseen problem provided it is not advection-dominated.  It should be mentioned that the only user-defined constants in our methodology are the number of pre-smoothing steps and post-smoothing steps as well as the scaling factor if one elects to use an additive Schwarz smoother rather than a multiplicative Schwarz smoother.  However, we have found that our method is optimally efficient irregardless of the number of pre- and post-smoothing steps selected.

An outline of the remainder of the paper is as follows.  In Section \ref{sec:NS},
we inspire the need for efficient linear solvers for the generalized Stokes and Oseen problems through a discussion of temporal discretization of the Navier-Stokes equations.  In Section \ref{sec:spatial}, we discuss spatial discretization of the generalized Stokes and Oseen problems.  In Section \ref{sec:DivConf}, we introduce the Stokes complex and demonstrate how to construct isogeometric compatible discretizations which commute with this complex.  In Section \ref{sec:multigrid}, we present our structure-preserving geometric multigrid methodology, and we prove that this methodology indeed yields discrete velocity fields which are divergence-free.  In Section \ref{sec:Results}, we apply the proposed multigrid method to a selection of generalized Stokes and Oseen problems.  Finally, in Section \ref{sec:Conclusion}, we provide concluding remarks.

\section{Temporal Discretization of the Navier-Stokes Equations and the Generalized Stokes and Oseen Problems}
\label{sec:NS}

To motivate the need for efficient linear solvers for the generalized Stokes and Oseen problems, we first demonstrate how such problems arise through semi-implicit temporal discretization of the incompressible Navier-Stokes equations subject to homogeneous Dirichlet boundary conditions. For $d \in \mathbb{Z}_+$, let $\Omega \subset \mathbb{R}^d$ denote an open, Lipschitz bounded domain, let $\Gamma$ denote the boundary of $\Omega$, and let $T \in \mathbb{R}_+$. Given $\nu \in \mathbb{R}_+$, ${\bf f}: \Omega \times (0,T) \rightarrow \mathbb{R}^d$, and ${\bf u}_0: \Omega \rightarrow \mathbb{R}^d$, the strong form of the Navier-Stokes problem then reads as follows: Find ${\bf u}: \overline{\Omega} \times [0,T] \rightarrow \mathbb{R}^d$ and $p: \Omega \times (0,T) \rightarrow \mathbb{R}$ such that:
\begin{align}
\begin{array}{rll}
\frac{\partial {\bf u}}{\partial t} + {\bf u} \cdot \nabla {\bf u} - \nu \Delta{\bf u} + \nabla p &= {\bf f}& \textup{ for } ({\bf x},t) \in \Omega \times (0,T) \\
\nabla \cdot {\bf u} &= 0 & \textup{ for } ({\bf x},t) \in \Omega \times (0,T)\\
{\bf u} &= {\bf 0} & \textup{ for } ({\bf x},t) \in \Gamma \times (0,T)\\
\left. {\bf u} \right|_{t = 0} &= {\bf u}_0 & \textup{ for } {\bf x} \in \Omega
\end{array}
\label{eqn:NS}
\end{align}
Above, ${\bf u}$ denotes the velocity field, $p$ denotes the pressure field, $\nu$ denotes the kinematic viscosity, ${\bf f}$ denotes the force per unit mass, and ${\bf u}_0$ denotes the initial velocity field.  The velocity field is uniquely specified by the Navier-Stokes problem while the pressure field is unique up to a constant.  To discretize in time, we first define a sequence of time instances $t_0 < t_1 < t_2 < \ldots < t_N$ such that $t_0 = 0$ and $t_N = T$, and we denote the velocity and pressure solutions at the $n^{\textup{th}}$ time instance as ${\bf u}^{(n)}$ and $p^{(n)}$ respectively for $n = 0, \ldots, N$.  We further define $t_{n+1/2} = \frac{t_n + t_{n+1}}{2}$.  Without loss of generality, we assume that the time instances are equi-spaced, and we define the time step size to be $\Delta t = t_{n+1} - t_{n}$.  The velocity solution at $N = 0$ is given by ${\bf u}^{(0)} = {\bf u}_0$, while to find the velocity and pressure solutions at each subsequent time instance, we must discretize the Navier-Stokes problem in time.  We discuss two demonstrative semi-implicit temporal discretization schemes herein, though the proceeding discussion also applies to other semi-implicit temporal discretization schemes\footnote{With a fully implicit time discretization scheme, one must turn to a nonlinear solution procedure such as Newton's method.  However, with Newton's method, one solves a sequence of generalized Oseen problems.  Hence, the multigrid methodology discussed here can also be employed to solve these problems.}  

Let us first consider the standard Crank-Nicolson/Adams-Bashforth scheme \cite{He07}.  In this approach, a central difference approximation of the unsteady term and linear interpolation approximations of the diffusive and pressure force terms are employed:
\begin{align}
\frac{\partial {\bf u}}{\partial t}(t_{n+1/2}) &\approx \frac{{\bf u}^{(n+1)}-{\bf u}^{(n)}}{\Delta t} \nonumber \\
\Delta {\bf u}(t_{n+1/2}) &\approx \frac{\Delta {\bf u}^{(n+1)}+\Delta {\bf u}^{(n)}}{2} \nonumber \\
\nabla p(t_{n+1/2}) &\approx \frac{\nabla p^{(n+1)}+\nabla p^{(n)}}{2} \nonumber
\end{align}
The advection term at $t_{n+1/2}$ is alternatively approximated using Taylor-series expansions involving time instances $t_{n-1}$ and $t_{n}$, resulting in\footnote{This approximation is not properly defined for $n$ = 0.  Consequently, the approximation is replaced by ${\bf u}^{(n)} \cdot \nabla {\bf u}^{(n)}$ for $n$ = 0 in practice.}:
\begin{align}
\left({\bf u} \cdot \nabla {\bf u}\right)(t_{n+1/2}) &\approx \frac{3}{2} {\bf u}^{(n)} \cdot \nabla {\bf u}^{(n)} - \frac{1}{2} {\bf u}^{(n-1)} \cdot \nabla {\bf u}^{(n-1)} \nonumber
\end{align}
Collecting the above approximations, we find that the resulting generalized Stokes system holds for each $n = 0, \ldots, N-1$:
\begin{align}
\begin{array}{rll}
\sigma {\bf u}^{(n+1)} - \nu \Delta {\bf u}^{(n+1)} + \nabla p^{(n+1)} &= {\bf f}_{GS}^{(n+1)}& \textup{ for } {\bf x} \in \Omega \\
\nabla \cdot {\bf u}^{(n+1)} &= 0& \textup{ for } {\bf x} \in \Omega \\
{\bf u}^{(n+1)} &= {\bf 0}& \textup{ for } {\bf x} \in \Gamma
\end{array}
\label{eqn:GS}
\end{align}
where $\sigma = \sigma = \sfrac{2}{\Delta t}$ and:
\begin{align}
{\bf f}_{GS}^{(n+1)} = {\bf f}(t_{n+1/2}) + \sigma {\bf u}^{(n)} - 3 {\bf u}^{(n)} \cdot \nabla {\bf u}^{(n)} + {\bf u}^{(n-1)} \cdot \nabla {\bf u}^{(n-1)} + \nu \Delta {\bf u}^{(n)} - \nabla p^{(n)}
\end{align}
The above generalized Stokes system is reaction-dominated for small time step sizes and diffusion-dominated for large time step sizes.  We demonstrate later that our geometric multigrid methodology is robust for both of these regimes.

The advantage of the Crank-Nicolson/Adams-Bashforth scheme is that the advection term is handled in a purely explicit manner.  After spatial discretization, this leads to a symmetric matrix problem.  However, the disadvantage of the scheme is that it is stable only if the time-step is chosen sufficiently small as to satisfy a CFL condition.  With this in mind, we next consider an unconditionally stable semi-implicit scheme introduced by Guermond \cite{Guermond99}.  In this scheme, the unsteady, diffusive, and pressure force terms are approximated as before, but the advection term is approximated as follows\footnote{This approximation is also not properly defined for $n$ = 0.  Consequently, the approximation is replaced by ${\bf u}^{(n)} \cdot \nabla \left( \left({\bf u}^{(n)} + {\bf u}^{(n+1)}\right)/2 \right)$ for $n$ = 0 in practice.}:
\begin{align}
\left({\bf u} \cdot \nabla {\bf u}\right)(t_{n+1/2}) &\approx \left( \frac{3}{2} {\bf u}^{(n)} - \frac{1}{2} {\bf u}^{(n-1)} \right) \cdot \nabla \left( \frac{{\bf u}^{(n+1)} + {\bf u}^{(n)}}{2} \right) \nonumber
\end{align}
Note that the advection velocity is approximated in an explicit manner while the gradient is approximated in an implicit manner.  Collecting the above approximations, we find that the resulting generalized Oseen system holds for each $n = 0, \ldots, N-1$:
\begin{align}
\begin{array}{rll}
\sigma {\bf u}^{(n+1)} + {\bf a}^{(n+1)} \cdot \nabla {\bf u}^{(n+1)} - \nu \Delta {\bf u}^{(n+1)} + \nabla p^{(n+1)} &= {\bf f}_{GO}^{(n+1/2)}& \textup{ for } {\bf x} \in \Omega \\
\nabla \cdot {\bf u}^{(n+1)} &= 0& \textup{ for } {\bf x} \in \Omega \\
{\bf u}^{(n+1)} &= {\bf 0}& \textup{ for } {\bf x} \in \Gamma
\end{array}
\label{eqn:GO}
\end{align}
where $\sigma = \sfrac{2}{\Delta t}$, ${\bf a}^{(n+1)} = \frac{3}{2} {\bf u}^{(n)} - \frac{1}{2} {\bf u}^{(n-1)}$, and:
\begin{align}
{\bf f}_{GO}^{(n+1)} = {\bf f}(t_{n+1/2}) + \sigma {\bf u}^{(n)} - {\bf a}^{(n+1)} \cdot \nabla {\bf u}^{(n)} + \nu \Delta {\bf u}^{(n)} - \nabla p^{(n)}
\end{align}
In opposition with the generalized Stokes system obtained earlier, the above system admits different behavior based on not only the scalars $\sigma$ and $\nu$ but also the advection velocity ${\bf a}^{(n+1)}$.  We demonstrate later that our geometric multigrid methodology is robust for this system provided it is not advection-dominated.  This holds if a CFL-like condition is satisfied.

\section{Spatial Discretization of the Generalized Stokes and Oseen Problems}
\label{sec:spatial}

Now that we have motivated the need for efficient linear solvers for the generalized Stokes and Oseen problems, we turn to the question of spatial discretization.  In this section, we present the basic ingredients associated with a mixed Galerkin discretization.  Later, we will specialize to the setting of isogeometric compatible discretizations.

\subsection{Weak Formulation of the Generalized Stokes and Oseen Problems}

To begin, we must state a weak formulation for the generalized Stokes and Oseen Problems.  We strictly consider the case of homogeneous Dirichlet boundary conditions without loss of generality.  Before proceeding, we must first define suitable velocity and pressure test spaces:
\begin{align}
{\bf H}^1_0(\Omega) := \left\{ {\bf v} \in {\bf H}^1(\Omega) : {\bf v } = {\bf 0} \textup{ on } \Gamma \right\} \nonumber \\
L^2_0(\Omega) := \left\{ q \in L^2(\Omega) : \int_{\Omega}q d\Omega = 0 \right\} \nonumber
\end{align}
We also assume that $\sigma, \nu \in \mathbb{R}_+$, ${\bf a} \in {\bf H}^1_0(\Omega)$, and ${\bf f} \in {\bf L}^2(\Omega)$, and we assume that the advection velocity is divergence-free, that is, $\nabla \cdot {\bf a} \equiv 0$.  With these assumptions in hand, the weak form of the generalized Stokes or Oseen problem is stated as follows:  Find ${\bf u} \in {\bf H}^1_0(\Omega)$ and $p \in L^2_0(\Omega)$ such that:
\begin{equation}
a({\bf v},{\bf u}) - b({\bf v},p) + b({\bf u},q) = \ell({\bf v})
\end{equation}
for all ${\bf v} \in {\bf H}^1_0(\Omega)$ and $q \in L^2_0(\Omega)$ where:
\begin{equation}
a({\bf v},{\bf u}) := \left\{ \begin{array}{cl}
\displaystyle \int_\Omega \sigma {\bf v} \cdot {\bf u} \ d \Omega + \int_\Omega \nu  \nabla {\bf v} : \nabla{\bf u} \ d \Omega & \text{generalized Stokes}\\
\displaystyle \int_\Omega \sigma  {\bf v} \cdot {\bf u} \ d \Omega + \int_\Omega {\bf v} \cdot \left( {\bf a} \cdot \nabla {\bf u} \right) \ d \Omega + \int_\Omega \nu  \nabla {\bf v} : \nabla{\bf u} \ d \Omega & \text{generalized Oseen}\\
\end{array} \right. \nonumber
\end{equation}
\[
b({\bf v},p) := \displaystyle \int_\Omega \left( \nabla \cdot {\bf v} \right) p  \ d \Omega
\]
\[
\ell({\bf v}) := \displaystyle \int_\Omega {\bf v} \cdot {\bf f} d\Omega
\]

\subsection{Mixed Galerkin Approximation of the Generalized Stokes and Oseen Problems}

To discretize in space using a mixed Galerkin formulation, we first must specify finite-dimensional approximation spaces for the velocity and pressure fields.  We denote these spaces as ${\bf V}_h \subset {\bf H}^1_0(\Omega)$ and $\text{Q}_h \subset L^2_0(\Omega)$ respectively, but we defer the discussion of suitable approximation spaces to Section \ref{sec:DivConf}.  With approximation spaces defined, the mixed Galerkin formulation of the generalized Stokes or Oseen problem is stated as follows: Find ${\bf u}_h \in {\bf V}_h$ and $p_h \in \text{Q}_h$ such that
\begin{equation}
a({\bf v}_h,{\bf u}_h) - b({\bf v}_h,p_h) + b({\bf u}_h,q_h) = \ell({\bf v}_h)
\label{eqn:GalApp}
\end{equation}
for all ${\bf v}_h \in {\bf V}_h$ and $q \in \text{Q}_h$.  It should be noted that the velocity and pressure approximation spaces may not be arbitrarily selected.  Instead, they should be chosen such that the Babu\v{s}ka-Brezzi inf-sup condition is satisfied \cite{Babuska71,Brezzi74}.  We later select isogeometric divergence-conforming discretizations for spatial discretization which indeed satisfy such a condition.

\subsection{Weak Enforcement of No-Slip Boundary Conditions}

The no-slip boundary condition, ${\bf u} \times {\bf n} = {\bf 0}$ where ${\bf n}$ is the outward facing normal to $\Gamma$, leads to the formation of boundary layers for wall-bounded flows.  High mesh resolution is required near boundary layers to accurately represent associated sharp layers, so when the no-slip condition is strongly enforced in a mixed Galerkin formulation, inaccurate flow field approximations are obtained for insufficiently-resolved boundary layer meshes. It has recently been shown that superior results can be achieved by imposing the no-penetration boundary condition strongly and the no-slip boundary condition weakly using a combination of upwinding and Nitsche's method \cite{bazilevs07_1, bazilevs07_2}. With such an approach, we first specify finite-dimensional velocity and pressure approximation spaces as before, but we only require that the corresponding discrete velocity fields satisfy ${\bf v} \cdot {\bf n} = 0$. That is, we specify ${\bf V}_h \subset {\bf H}^1_n(\Omega) = \left\{ {\bf v} \in {\bf H}^1(\Omega): {\bf v} \cdot {\bf n} = 0 \textup{ on } \Gamma \right\}$ and $\text{Q}_h \subset L^2_0(\Omega)$.  The corresponding formulation for the generalized Stokes or Oseen problem is then stated as: Find ${\bf u}_h \in {\bf V}_h$ and $p_h \in \text{Q}_h$ such that:
\begin{equation}
a_h({\bf v}_h,{\bf u}_h) - b({\bf v}_h,p_h) + b({\bf u}_h,q_h) = \ell({\bf v}_h)
\label{eqn:WeakEnforce}
\end{equation}
for all ${\bf v}_h \in {\bf V}_h$ and $q_h \in \text{Q}_h$ where:
\begin{equation*}
a_h({\bf v}_h,{\bf u}_h) := a({\bf v}_h,{\bf u}_h) - \int_\Gamma \nu {\bf v}_h \cdot \nabla_{\bf n} {\bf u}_h \ d \Gamma - \int_\Gamma \nu \nabla_{\bf n} {\bf v}_h \cdot {\bf u}_h \ d \Gamma + \int_\Gamma \frac{C_I \nu}{h} {\bf v}_h \cdot {\bf u}_h \ d \Gamma
\end{equation*}
Above, $h$ is the wall-normal element mesh size and $C_I$ is a positive constant that must be chosen sufficiently large to ensure coercivity of the bilinear form $a_h(\cdot,\cdot)$.  Appropriate values for the constant $C_I$ can be obtained by solving element-wise eigenvalue problems or by appealing to analytical upper bounds for the trace inequality \cite{Evans13_4}.  We choose to weakly enforce no-slip boundary conditions throughout the remainder of this work.  This not only leads to more accurate numerical results, but it also ensures proper solution behavior in the limit of zero viscosity \cite{Evans13_2,Evans13_3}.

\subsection{The Matrix Problem}

The formulation given by \eqref{eqn:WeakEnforce} yields a linear matrix system when the discrete velocity and pressure spaces are provided basis functions. Let $\left\{ {\bf N}^v_i \right\}_{i=1}^{n_v}$ denote a set of vector basis functions for ${\bf V}_h$ where $n_v = \text{dim}\left({\bf V}_h\right)$, and let $\left\{ N^q_i \right\}_{i=1}^{n_q}$ denote a set of scalar basis functions for $\text{Q}_h$ where $n_q = \text{dim}\left(\text{Q}_h\right)$. Then the resulting matrix system takes the form:

\begin{equation}
\left[ \begin{array}{cc}
    {\bf A} & -{\bf B} \\
    {\bf B}^T & {\bf 0} 
\end{array} \right] \left( \begin{array}{c}
{\bf u}\\
{\bf p} \end{array} \right) = \left( \begin{array}{c}
{\bf f}\\
{\bf 0} \end{array} \right)
\label{eqn:BlockMatSys}
\end{equation}
where:
\begin{align}
[{\bf A}]_{ij} &:= a_h({\bf N}^v_i,{\bf N}^v_j) \nonumber \\
[{\bf B}]_{ij} &:= b({\bf N}^v_i, N_j^q) \nonumber \\
[{\bf f}]_i &:= \ell({\bf N}^v_i) \nonumber
\end{align}
Moreover, this matrix system can be written concisely as:
\begin{equation}
{\bf K} {\bf U} = {\bf F}
\end{equation}
where the matrix ${\bf K}$ has the block structure in \eqref{eqn:BlockMatSys} and the vectors ${\bf U}$ and ${\bf F}$ are vectors representing the group variables in \eqref{eqn:BlockMatSys}.

\section{The Stokes Complex and Isogeometric Compatible Discretizations}
\label{sec:DivConf}

It remains to specify suitable velocity and pressure approximation spaces for the generalized Stokes and Brinkman problems.  In this section, we present a particular selection of velocity and pressure approximation spaces which is not only inf-sup stable but also yields pointwise divergence-free discrete velocity fields.  Before doing so, however, we first introduce the so-called Stokes complex which succinctly captures the fundamental theorem of calculus and expresses the differential relationships between potential, velocity, and pressure fields.

\subsection{The Stokes Complex}
The Stokes complex \cite{Evans11,Falk13} is a cochain complex of the form:
\begin{align}
\begin{CD}
0 @>>> \Phi @>\vec{\nabla}>> \bm{\Psi} @>\vec{\nabla}\times>> {\bf V} @>\vec{\nabla}\cdot>> \text{Q} @>>> 0
\end{CD}
\label{eqn:StokesComplex}
\end{align}
in the three-dimensional setting where:
\begin{align}
\Phi &:= H^1_0(\Omega) &
\bm{\Psi} &:= \left\{ \bm{\psi} \in {\bf L}^2(\Omega): \vec{\nabla}\times \bm{\psi} \in {\bf H}^1(\Omega) \textup{ and } \bm{\psi} \times {\bf n} = {\bf 0} \textup{ on } \Gamma \right\} \nonumber \\
{\bf V} &:= {\bf H}^1_n(\Omega) &
\text{Q} &:= L^2_0(\Omega) \nonumber
\end{align}
are infinite-dimensional spaces of scalar potential fields, vector potential fields, velocity fields, and pressure fields. The Stokes complex is a smoothed version of the classical $L^2$ de Rham complex, and when the domain $\Omega \subset \mathbb{R}^3$ is simply connected with simply connected boundary, the Stokes complex is exact.  This means that every pressure field may be represented as the divergence of a velocity field, every divergence-free velocity field may be represented as the curl of a vector potential field, and every curl-free vector potential field may be represented as the gradient of a scalar potential field.  An analogous two-dimensional Stokes complex also exists, though for brevity, the interested reader is referred to \cite{Evans11} for more details.

It has been shown in previous works that the Stokes complex endows the generalized Stokes and Oseen problems with important underlying topological structure.  In particular, the infinite-dimensional inf-sup condition may be derived from the complex \cite{Evans11}.  As such, there is impetus for developing finite-dimensional approximations of the Stokes complex.  Such discrete complexes are referred to as discrete Stokes complexes, and when these complexes are endowed with special commuting projection operators, they form the following commuting diagram with the Stokes complex:
\begin{align}
\begin{CD}
0 @>>> \Phi @>\vec{\nabla}>> \bm{\Psi} @>\vec{\nabla}\times>> {\bf V} @>\vec{\nabla}\cdot>> \text{Q} @>>> 0\\
@. @VV \Pi_\phi V @VV \Pi_\psi V @VV \Pi_v V @VV \Pi_q V \\
0 @>>> \Phi_h @>\vec{\nabla}>> \bm{\Psi}_h @>\vec{\nabla}\times>> {\bf V}_h @>\vec{\nabla}\cdot>> \text{Q}_h @>>> 0
\end{CD}
\label{eqn:StokesComplexCD}
\end{align}
where $\Phi_h$, $\bm{\Psi}_h$, ${\bf V}_h$, and $\text{Q}_h$ are discrete scalar potential, vector potential, velocity, and pressure spaces and $\Pi_\phi : \Phi \rightarrow \Phi_h$, $\Pi_\psi: \bm{\Psi} \rightarrow \bm{\Psi}_h$, $\Pi_v : {\bf V} \rightarrow {\bf V}_h$, and $\Pi_q : \text{Q} \rightarrow \text{Q}_h$ are the aforementioned commuting projection operators.  Remarkably, when ${\bf V}_h$ and $\text{Q}_h$ are selected as velocity and pressure approximation spaces in a mixed Galerkin formulation of the generalized Stokes or Oseen problem, the resulting approximation scheme is \textit{inf-sup stable and free of spurious oscillations} and the \textit{returned discrete velocity solution will be pointwise divergence-free} \cite{Evans13_1,Evans13_2}.  Both of these properties are a direct consequence of the commuting diagram above, and for the sake of completeness, we prove the second property below.

\begin{lemma}
Assume that the discrete velocity and pressure spaces ${\bf V}_h$ and $\text{Q}_h$ are associated with a discrete complex which commutes with the Stokes complex.  Suppose ${\bf v}_h \in {\bf V}_h$ satisfies $b({\bf v}_h,q_h) = 0$ for every $q_h \in \text{Q}_h$.  Then $\nabla \cdot {\bf v}_h = 0$ pointwise.
\end{lemma}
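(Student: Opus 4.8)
The plan is to exploit the fact, encoded in the bottom row of the commuting diagram~\eqref{eqn:StokesComplexCD}, that the divergence operator maps the discrete velocity space into the discrete pressure space, i.e.\ $\nabla\cdot{\bf V}_h \subseteq \text{Q}_h$. Granting this, the argument is short and purely algebraic, amounting to testing the constraint residual against itself.

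\textbf{Step 1: the divergence of a discrete velocity field is a discrete pressure field.} The last arrow of the discrete Stokes complex is the genuine divergence operator $\vec{\nabla}\cdot : {\bf V}_h \to \text{Q}_h$, so for the given ${\bf v}_h \in {\bf V}_h$ we have $\nabla\cdot{\bf v}_h \in \text{Q}_h$. For definiteness one also checks that this candidate has zero mean, as required for membership in $\text{Q}_h \subseteq L^2_0(\Omega)$: since ${\bf v}_h \cdot {\bf n} = 0$ on $\Gamma$, the divergence theorem gives $\int_\Omega \nabla\cdot{\bf v}_h \, d\Omega = \int_\Gamma {\bf v}_h\cdot{\bf n}\, d\Gamma = 0$.

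\textbf{Step 2: test against the divergence itself.} By hypothesis $b({\bf v}_h,q_h) = \int_\Omega (\nabla\cdot{\bf v}_h)\,q_h\, d\Omega = 0$ for every $q_h \in \text{Q}_h$. Since $\nabla\cdot{\bf v}_h$ is itself an admissible test function by Step 1, choosing $q_h = \nabla\cdot{\bf v}_h$ yields $\|\nabla\cdot{\bf v}_h\|_{L^2(\Omega)}^2 = 0$, hence $\nabla\cdot{\bf v}_h = 0$ almost everywhere; because discrete velocity fields in ${\bf V}_h$ are piecewise smooth, this is in fact equality pointwise.

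The main obstacle, and the only place the hypothesis is actually used, is Step~1: one must be certain that the divergence appearing in the bottom row of~\eqref{eqn:StokesComplexCD} coincides with the true differential operator $\nabla\cdot$, rather than some projected or otherwise modified surrogate. This is precisely what commutativity of the rightmost square, $\Pi_q \circ (\vec{\nabla}\cdot) = (\vec{\nabla}\cdot)\circ\Pi_v$, together with the requirement that $\Pi_q$ be a projection onto $\text{Q}_h$, guarantees; notably, no inf-sup constant, no exactness of the complex, and no further regularity of ${\bf v}_h$ is needed. Everything after that is the standard manoeuvre of testing a vanishing constraint residual against itself.
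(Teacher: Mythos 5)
Your proof is correct and is essentially the paper's own argument: the authors likewise take $q_h = \nabla\cdot{\bf v}_h$ and conclude from $\|\nabla\cdot{\bf v}_h\|^2_{L^2(\Omega)} = b({\bf v}_h,q_h) = 0$. Your additional checks --- that $\nabla\cdot{\bf v}_h$ has zero mean (so it genuinely lies in $\text{Q}_h \subseteq L^2_0(\Omega)$) and that the divergence in the bottom row of the complex is the true differential operator --- are sensible justifications the paper leaves implicit.
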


\begin{proof}
Let $q_h = \nabla \cdot {\bf v}_h$.  Then $\| \nabla \cdot {\bf v}_h \|^2_{L^2(\Omega)} = b({\bf v}_h,q_h) = 0$ and the desired result follows.
\end{proof}

While we have demonstrated the benefit of using velocity and pressure spaces coming from a discrete Stokes complex, we have not yet described how to arrive at such spaces.  In this paper, we turn to the use of so-called isogeometric compatible B-spline discretizations which are the focus of the next two subsections.

\subsection{Univariate and Multivariate B-splines}

The basic building blocks of isogeometric compatible B-spline discretizations, like any isogeometric analysis technology, are B-splines \cite{Cottrell09}.  B-splines are piecewise polynomial functions, but unlike $C^0$-continuous finite elements, B-splines may exhibit high levels of continuity.  Univariate B-splines are constructed by first specifying a polynomial degree $p$\footnote{The notation $p$ is used for both the pressure field as well as the polynomial degree.  Thus, the reader should discern what term $p$ refers to in various portions of the paper by context.}, a number of basis functions $n$, and an open knot-vector $\Xi = \left\{\xi_0,\xi_1,\ldots,\xi_{n+p+1}\right\}$, a non-decreasing vector of knots $\xi_i$ such that the first and last knot are repeated $p+1$ times. We assume without loss of generality that the first and last knot are 0 and 1 respectively such that the domain of the knot vector is $(0,1)$.  With a knot vector in hand, univariate B-spline basis functions are defined recursively through the Cox-deBoor formula:
\begin{align}
\begin{array}{rl}
	\hat{N}_{i,p}(\xi) &:= \frac{\xi - \xi_i}{\xi_{i+p}-\xi_i}\hat{N}_{i,p-1}(\xi) + \frac{\xi_{i+p+1}-\xi}{\xi_{i+p+1}-\xi_{i+1}}\hat{N}_{i+1,p-1}(\xi) \hspace{5pt} \textup{ for } p > 0\\ \\
	\hat{N}_{i,0}(\xi) &:= \left\{\begin{array}{rl}
			1& \xi_i \le \xi < \xi_{i+1}\\
		0& \text{elsewhere} \end{array} \right.
		\end{array}
		\nonumber
\end{align}
Figure \ref{fig:bases} shows example sets of unvariate B-spline basis functions.  We can alternatively define B-splines not from the knot vector itself, but instead a vector of unique knot values $\bm{\zeta} = \left\{\zeta_1, \zeta_2, \ldots, \zeta_{n_k}\right\}$ and a regularity vector $\bm{\alpha} = \left\{\alpha_1, \alpha_2, \ldots, \alpha_{n_k}\right\}$ such that the B-splines have $\alpha_j$ continuous derivatives across $\zeta_j$.  By construction, $\alpha_1 = \alpha_{n_k} = -1$.  We will later employ the convention $\bm{\alpha} - 1 = \left\{ -1, \alpha_2 - 1, \ldots, \alpha_{n_k-1}-1, -1 \right\}$.

\begin{figure}[b!]
\centering
\includegraphics[scale=0.9]{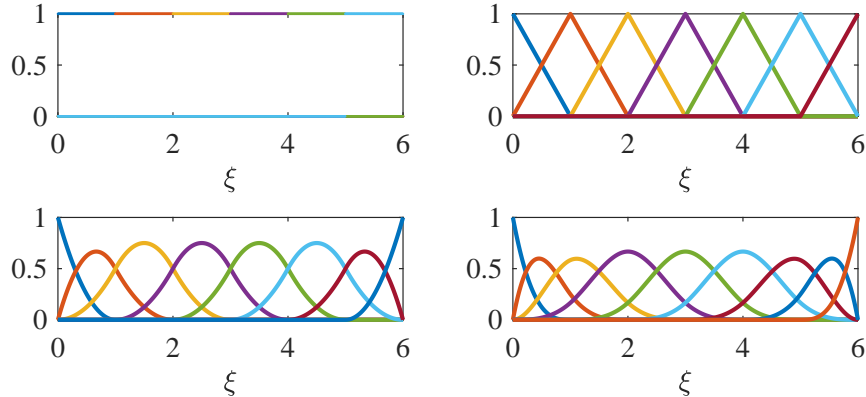}
\captionsetup{format = hang}
\caption{Maximal continuity univariate B-spline basis functions of varying polynomial degree associated with a vector of unique knot values $\bm{\zeta} = \left\{0,1,2,3,4,5,6\right\}$: $p = 0$ (upper left), $p = 1$ (upper right), $p = 2$ (lower left), and $p = 3$ (lower right).}
\label{fig:bases}
\end{figure}

Given a set of knot-vectors and polynomial degrees, multivariate B-spline basis functions are obtained through a tensor-product of unvariate B-spline basis functions:
\[
\hat{N}_{{\bf i},{\bf p}}(\bm{\xi}) := \prod_{k=1}^d \hat{N}_{i_k,p_k}(\xi_k)
\]
where ${\bf i} = (i_1, i_2, \ldots, i_d)$ and ${\bf p} = (p_1, p_2, \ldots, p_d)$
We denote the corresponding space of multidimensional B-splines over the parametric domain $\hat{\Omega} = (0,1)^d$ as:
\[
S^{p_1,p_2,\ldots,p_d}_{\bm{\alpha}_1,\bm{\alpha}_2,\ldots,\bm{\alpha}_d}\left(\mathcal{M}_h\right) := \left\{ f : \hat{\Omega} \rightarrow \mathbb{R} \ \Big| \ f(\bm{\xi}) = \sum_{\bf i} a_{\bf i} \hat{N}_{{\bf i},{\bf p}} (\bm{\xi}) \right\},
\]
where $\bm{\alpha}_j$ is the regularity vector associated with the $j^{\text{th}}$ direction where $j = 1, \ldots, d$ and $\bm{\mathcal{M}}_h$ is the parametric mesh defined by the vectors of unit knot values in each parameteric direction.  Note that the space is fully characterized by the polynomial degrees, regularity vectors, and parametric mesh as indicated by the notation.  For ease of notation, however, we drop the dependence on the parameteric mesh and instead use $S^{p_1,p_2,\ldots,p_d}_{\bm{\alpha}_1,\bm{\alpha}_2,\ldots,\bm{\alpha}_d} = S^{p_1,p_2,\ldots,p_d}_{\bm{\alpha}_1,\bm{\alpha}_2,\ldots,\bm{\alpha}_d}\left(\mathcal{M}_h\right)$ in what follows.

\subsection{Isogeometric Compatible B-splines}

We are now in a position to define isogeometric compatible B-splines.  Their definition is made possible through the observation that the derivative of univariate B-splines of degree $p$ are univariate B-splines of degree $p - 1$.  Since multivariate B-splines are tensor-products of univariate B-splines, the aforementioned property naturally generalizes to higher dimension, allowing us to build a discrete Stokes complex of B-spline spaces \cite{Evans11,Buffa11_diff}.  We first define such a discrete Stokes complex in the parametric domain $\hat{\Omega} = (0,1)^d$ for both $d = 2$ and $d = 3$ before constructing a discrete Stokes complex in the physical domain of interest using a set of structure-preserving push-forward/pull-back operators.

In the two-dimensional setting, we define the following B-spline spaces over the unit square:
\begin{equation}
\begin{aligned}
\hat{\Psi}_h &:= \left\{ \hat{\psi}_h \in S_{\bm{\alpha}_1,\bm{\alpha}_2}^{p_1,p_2}: \hat{\psi}_h = 0 \textup{ on } \hat{\Gamma} \right\}\\
\hat{\bf V}_h &:= \left\{ \hat{{\bf v}}_h \in S_{\bm{\alpha}_1,\bm{\alpha}_2-1}^{p_1,p_2-1} \times S_{\bm{\alpha}_1-1,\bm{\alpha}_2}^{p_1-1,p_2}: \hat{{\bf v}}_h \cdot {\bf n} = 0 \textup{ on } \hat{\Gamma} \right\}\\
\hat{\text{Q}}_h &:= \left\{ \hat{q}_h \in S_{\bm{\alpha}_1-1,\bm{\alpha}_2-1}^{p_1-1,p_2-1}: \int_{\hat{\Omega}} \hat{q}_h d\hat{\Omega} = 0 \right\}
\label{eqn:2DdivConfSpace} \nonumber
\end{aligned}
\end{equation}
where $\hat{\Psi}_h$ is the B-spline space of streamfunctions, $\hat{\bf V}_h$ is the B-spline space of flow velocities, and $\hat{\text{Q}}_h$ is the B-spline space of pressures. These discrete spaces are endowed with B-spline basis functions $\{ \hat{N}^\psi_i \}_{i=1}^{n_\psi}$, $\{ \hat{\bf N}^v_i \}_{i=1}^{n_v}$, and $\{ \hat{N}^p_i \}_{i=1}^{n_q}$, respectively, where $n_\psi$ is the number of streamfunction basis functions, $n_v$ is the number of velocity basis functions, and $n_q$ is the number of pressure basis functions, all of which can be inferred from the chosen polynomial degrees and knot vectors. One can readily show that these spaces form the following discrete Stokes complex:
\begin{align}
\begin{CD}
0 @>>> \hat{\Psi}_h @>\vec{\nabla}^\perp>> \hat{\bf V}_h @>\vec{\nabla}\cdot>> \hat{\text{Q}}_h @>>> 0
\end{CD}
\label{eqn:2DStructurePreserveBSplineCD}
\end{align}
and provided the functions in the B-spline pressure space are at least $C^0$-continuous, there exist a set of commuting projection operators that make the above discrete complex commute with the Stokes complex.  Thus, we refer to the spaces $\hat{\Psi}_h$, $\hat{{\bf V}}_h$, and $\hat{\text{Q}}_h$ as compatible B-spline spaces.  As mentioned previously, if we select $\hat{{\bf V}}_h$ and $\hat{\text{Q}}_h$ as velocity and pressure approximation spaces in a mixed Galerkin formulation of the generalized Stokes or Oseen problems, then the resulting scheme yields a pointwise divergence-free velocity field.  The degrees of freedom associated with compatible B-splines are associated with the geometrical entries of the underlying control mesh.  This is graphically illustrated in Figure \ref{fig:control_mesh} which shows that streamfunction degrees of freedom are associated with control points, velocity degrees of freedom are associated with (and aligned normal to) control edges, and pressure degrees of freedom are associated with control cells.  Each degree of freedom corresponds to a particular basis function, and to visualize these basis functions, we have selected four degrees of freedom in Figure \ref{fig:control_mesh} and visualized the respective basis functions in Figure \ref{fig:StructPreserveBSpline}\footnote{Note that the pressure basis function we have highlighted does not have zero average over the parametric domain.  In practice, we enforce this constraint using a Lagrange multiplier rather than to the individual pressure basis functions.}.

\begin{figure}[t!]
\centering
\includegraphics[scale=0.65]{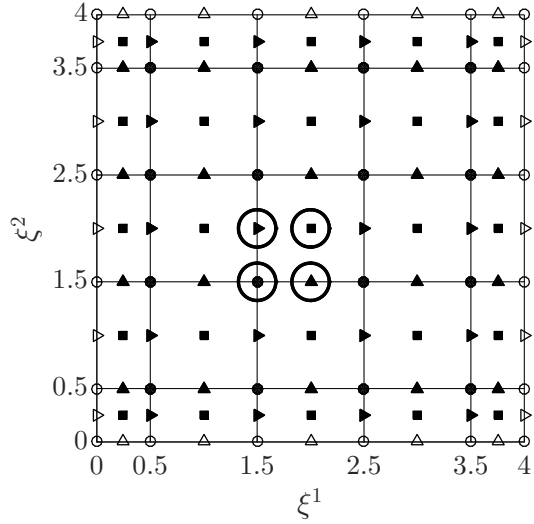}
\captionsetup{format = hang}
\caption{Control mesh and degrees of freedom for maximal continuity compatible B-splines of degree $p_1 = p_2 = 2$ associated with vectors of unique knot values $\bm{\zeta}_1 = \bm{\zeta}_2 = \left\{0, 1, 2, 3, 4\right\}$. Filled circles (\ding{108}) denote streamfunction degrees of freedom, triangles ($\blacktriangleright$, $\blacktriangle$) denote velocity degrees of freedom, and filled squares (\ding{110}) denote pressure degrees of freedom. Hollow markers indicate degrees of freedom associated with boundary conditions.}
\label{fig:control_mesh}
\end{figure}

\begin{figure}[t!]
\centering
\begin{subfigure}{.45\textwidth}
	\includegraphics{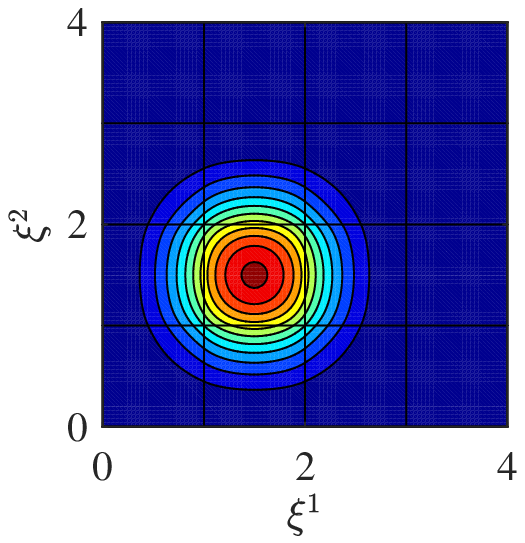}
	%\caption{$s(\bm{\xi}) \in S_{2,2}^{1,1} = \hat{S}_h$}
\end{subfigure}
\begin{subfigure}{.45\textwidth}
	\includegraphics{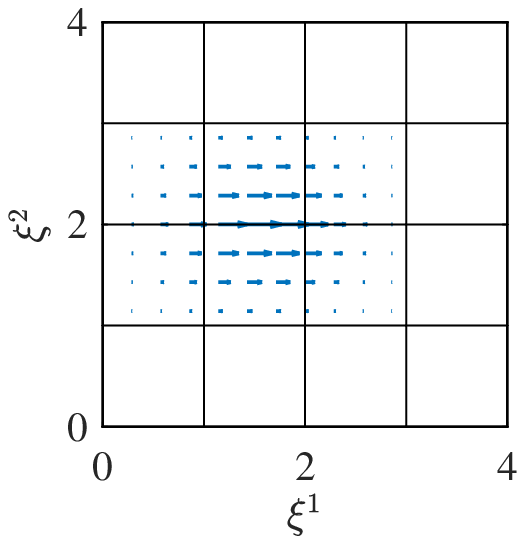}
	%\caption{${\bf r}_1(\bm{\xi}) \in S_{2,1}^{1,0} \subset \hat{\bf R}_h$}
\end{subfigure}\\
\begin{subfigure}{.45\textwidth}
	\includegraphics{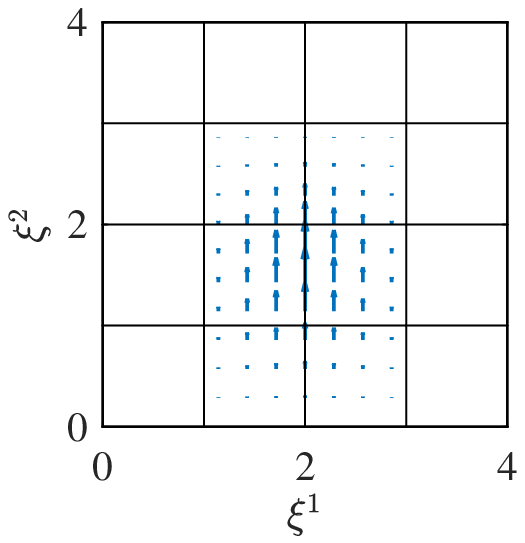}
	%\caption{${\bf r}_2(\bm{\xi}) \in S_{1,2}^{0,1} \subset \hat{\bf R}_h$}
\end{subfigure}
\begin{subfigure}{.45\textwidth}
	\includegraphics{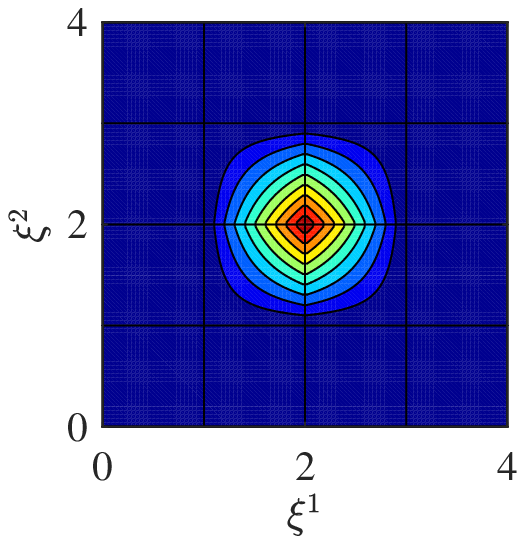}
	%\caption{$w(\bm{\xi}) \in S_{1,1}^{0,0} = \hat{W}_h$}
\end{subfigure}
\caption{Streamfunction (upper left), velocity (upper right and lower left), and pressure (lower right) basis functions associated with the circled degrees of freedom in Figure \ref{fig:control_mesh}.}
\label{fig:StructPreserveBSpline}
\end{figure}

In the three-dimensional setting, we define the following B-spline spaces over the unit cube:
\begin{equation}
\begin{aligned}
\hat{\Phi}_h &:= \left\{ \hat{\phi}_h \in S_{\bm{\alpha}_1,\bm{\alpha}_2,\bm{\alpha}_3}^{p_1,p_2,p_3}: \hat{\phi}_h = 0 \textup{ on } \hat{\Gamma} \right\}\\
\hat{\bf \Psi}_h &:= \left\{ \hat{\bm{\psi}}_h \in S_{\bm{\alpha}_1-1,\bm{\alpha}_2,\bm{\alpha}_3}^{p_1-1,p_2,p_3} \times S_{\bm{\alpha}_1,\bm{\alpha}_2-1,\bm{\alpha}_3}^{p_1,p_2-1,p_3} \times S_{\bm{\alpha}_1,\bm{\alpha}_2,\bm{\alpha}_3-1}^{p_1,p_2,p_3-1} : \hat{\bm{\psi}}_h \times {\bf n} = {\bf 0} \textup{ on } \hat{\Gamma} \right\}\\
\hat{\bf V}_h &:= \left\{ \hat{\bf v}_h \in  S_{\bm{\alpha}_1,\bm{\alpha}_2-1,\bm{\alpha}_3-1}^{p_1,p_2-1,p_3-1} \times S_{\bm{\alpha}_1-1,\bm{\alpha}_2,\bm{\alpha}_3-1}^{p_1-1,p_2,p_3-1} \times S_{\bm{\alpha}_1-1,\bm{\alpha}_2-1,\bm{\alpha}_3}^{p_1-1,p_2-1,p_3} : \hat{\bf v}_h \cdot {\bf n} = 0 \textup{ on } \hat{\Gamma} \right\}\\
\hat{\text{Q}}_h &:= \left\{ \hat{q}_h \in S_{\bm{\alpha}_1-1,\bm{\alpha}_2-1,\bm{\alpha}_3-1}^{p_1-1,p_2-1,p_3-1} : \int_{\hat{\Omega}} \hat{q}_h d\hat{\Omega} = 0 \right\}
\label{eqn:3DdivConfSpace} \nonumber
\end{aligned}
\end{equation}
where $\hat{\Phi}_h$ is the B-spline space of scalar potentials, $\hat{\bf \Psi}_h$ is the B-spline space of vector potentials, $\hat{\bf V}_h$ is the B-spline space of flow velocities, and $\hat{\text{Q}}_h$ is the B-spline space of pressures. These discrete spaces are endowed with the basis functions $\{ \hat{N}^\phi_i \}_{i=1}^{n_\phi}$, $\{ \hat{\bf N}^\psi_i \}_{i=1}^{n_\psi}$, $\{ \hat{\bf N}^v_i \}_{i=1}^{n_v}$, and $\{ \hat{N}^p_i \}_{i=1}^{n_q}$, respectively, where $n_\phi$ is the number of scalar potential basis functions, $n_\psi$ is the number of vector potential basis functions, $n_v$ is the number of velocity basis functions, and $n_q$ is the number of pressure basis functions, all of which can be inferred from the chosen polynomial degrees and knot vectors. Once again, one can show that the above spaces form the following discrete Stokes complex:
\begin{align}
\begin{CD}
0 @>>> \hat{\Phi}_h @>\vec{\nabla}>> \hat{\bf \Psi}_h @>\vec{\nabla}\times>> \hat{\bf V}_h @>\vec{\nabla}\cdot>> \hat{\text{Q}}_h @>>> 0
\end{CD}
\label{eqn:3DStructurePreserveBSplineCD}
\end{align}
and provided the functions in the B-spline pressure space are at least $C^0$-continuous, there exist a set of commuting projection operators that make the above discrete complex commute with the Stokes complex.

Heretofore, we have discussed how to construct compatible B-splines in the parametric domain.  To define compatible B-splines in the physical domain $\Omega$, we need to first define a piece-wise smooth bijective mapping ${\bf F} : \hat{\Omega} \rightarrow \Omega$.  This mapping can be defined using Non-Uniform Rational B-splines (NURBS), for instance, as is commonly done in the isogeometric analysis community \cite{Cottrell09}.  With this mapping in hand, we define two-dimensional compatible B-spline spaces in the physical domain via the relations:
\begin{equation}
\begin{aligned}
\Psi_h &:= \left\{ \psi_h \in \Psi: \psi_h \circ {\bf F} \in \hat{\Psi}_h \right\} \\
{\bf V}_h &:= \left\{ {\bf v}_h \in {\bf V}: \text{det}\left({\bf J}\right){\bf J}^{-1} {\bf v}_h \circ {\bf F} \in \hat{{\bf V}}_h \right\} \\
\text{Q}_h &:= \left\{ q_h \in \text{Q}: \text{det}\left({\bf J}\right) q_h \circ {\bf F} \in \hat{\text{Q}}_h \right\}
\label{eqn:physical_2D} \nonumber
\end{aligned}
\end{equation}
and three-dimensional compatible B-spline spaces via the relations:
\begin{equation}
\begin{aligned}
\Phi_h &:= \left\{ \phi_h \in \Phi: \phi_h \circ {\bf F} \in \hat{\Phi}_h \right\} \\
\bm{\Psi}_h &:= \left\{ \bm{\psi}_h \in \bm{\Psi}: {\bf J}^{-T}\bm{\psi}_h \circ {\bf F} \in \hat{\bm{\Psi}}_h \right\} \\
{\bf V}_h &:= \left\{ {\bf v}_h \in {\bf V}: \text{det}\left({\bf J}\right){\bf J}^{-1} {\bf v}_h \circ {\bf F} \in \hat{{\bf V}}_h \right\} \\
\text{Q}_h &:= \left\{ q_h \in \text{Q}: \text{det}\left({\bf J}\right) q_h \circ {\bf F} \in \hat{\text{Q}}_h \right\}
\label{eqn:physical_3D} \nonumber
\end{aligned}
\end{equation}
where  ${\bf J} = \partial_{\bm{\xi}} {\bf F}$ is the Jacobian of the parametric mapping. Corresponding basis functions in the physical domain are defined via push-forwards of the basis functions in the parametric domain, and we denote the discrete velocity basis functions as $\{{\bf N}^v_i \}_{i=1}^{n_v}$ and the basis functions for other quantities in analogous fashion.  It is easily shown that the compatible B-spline spaces in the physical domain also comprise a discrete complex which commutes with the Stokes complex.  The compatible B-splines in the physical domain are referred to as isogeometric compatible B-splines as they are built from B-splines, the basis building blocks of geometric modeling, and they are defined on the exact geometry of the problem of interest.

\subsection{B-spline Refinement}
\label{subsec:refinement}

\begin{figure}[b!]
\centering
\includegraphics[scale=0.9]{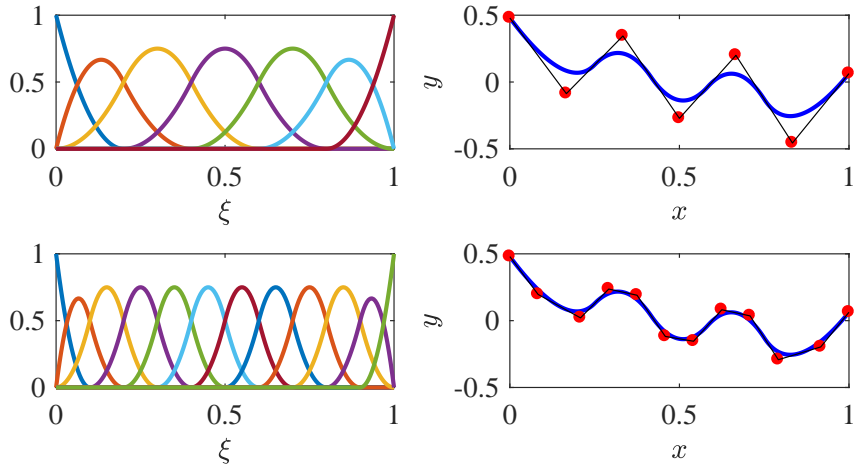}
\captionsetup{format = hang}
\caption{The action of knot insertion for univariate quadratic B-splines.  Top Left: The original quadratic B-spline basis with $\Xi=(0, 0, 0, .2, .4, .6, .8, 1, 1, 1)$.  Top Right: A B-spline function expressed in terms of the original quadratic B-spline basis.  Bottom Left: The refined quadratic B-spline basis with $\tilde{\Xi} = (0, 0, 0, .1, .2, .3, .4, .5, .6, .7, .8, .9, 1, 1, 1)$ (bottom, left).  Bottom right: The same B-spline function as illustrated in the top right panel but expressed in terms of the refined quadratic B-spline basis.}
\label{fig:NURBSrefine}
\end{figure}

One more concept needs to be introduced before proceeding forward, namely the concept of B-spline refinement.  For a fixed set of polynomial degrees, B-spline refinement is carried out by a process referred to as knot insertion \cite{Goldman92}.  In the univariate setting, we start with a particular knot vector $\Xi$ and then insert a sequence of knots to arrive at a refined knot vector $\tilde{\Xi}$ such that $\Xi \subset \tilde{\Xi}$.  The B-spline basis functions associated with the original knot vector, denoted as $\left\{ \hat{N}_{i_p}(\xi) \right\}_{i=1}^n$, can be represented as linear combinations of the basis functions associated with the refined knot vector, denoted as $\left\{ \tilde{N}_{i,p}(\xi) \right\}_{i=1}^{\tilde{n}}$, using a transformation matrix ${\bf T}$.  This relationship is expressed mathematically as:
\begin{equation}
\label{eqn:knot_insertion}
\hat{N}_{i,p}(\xi) =  \sum_{j = 1}^{\tilde{n}} [{\bf T}]_{ij} \tilde{N}_{j,p}(\xi) \nonumber
\end{equation}
for $i = 1, \ldots, n$.  Consequently, if a B-spline function takes the form:
\begin{equation}
\hat{u}(\xi) = \sum_{i=1}^{n} \hat{u}_i \hat{N}_{i,p}(\xi)  \nonumber
\end{equation}
it can be alternately be represented as:
\begin{equation}
\hat{u}(\xi) = \sum_{j=1}^{\tilde{n}} \tilde{u}_j \tilde{N}_{j,p}(\xi)  \nonumber
\end{equation}
where:
\begin{equation}
\tilde{u}_j =  \sum_{i = 1}^{n} [{\bf T}]_{ij} \hat{u}_i  \nonumber
\end{equation}
for $j = 1, \ldots, \tilde{n}$.  Figure \ref{fig:NURBSrefine} depicts the action of knot insertion for univariate quadratic B-splines.   B-spline refinement in the multivariate setting (including the compatible B-spline setting) is carried out in a tensor-product fashion, and the transformation matrix ${\bf T}$ takes the same form in both the parametric domain and the physical domain.  There exist a variety of algorithms capable of performing knot insertion \cite{Goldman92,Piegl12,Thomas15} which can be used to construct the transformation matrix ${\bf T}$, so we do not discuss this construction further in this paper.

\section{A Structure-Preserving Geometric Multigrid Methodology}
\label{sec:multigrid}

At last, we are ready to present our geometric multigrid methodology for isogeometric compatible discretizations of the generalized Stokes and Oseen problems.  We begin this section by reviewing the basics of the geometric multigrid approach as well as the required ingredients in the setting of an isogeometric compatible discretization.  Then, we introduce the Schwarz-style smoothers which our methodology leans upon.  We then show that our methodology preserves the divergence-free constraint on the velocity field and that it effectively ellipticizes the underlying system of interest.  We limit our discussion to the V-cycle algorithm, though our approach can also be applied within a W-cycle or Full Multigrid framework \cite{Briggs00}.

\subsection{Nested B-spline Stokes Complexes, Intergrid Transfer Operators, and the V-Cycle Algorithm}

Assume that we have a sequence of nested B-spline Stokes complexes that have been obtained through knot insertion.  We denote the discrete velocity and pressure spaces associated with this sequence as $\left\{ {\bf V}_{\ell} \right\}_{\ell = 0}^{n_\ell}$ and $\left\{ \text{Q}_{\ell} \right\}_{\ell = 0}^{n_\ell}$ respectively where $n_\ell$ is the number of levels, and we note that:
\begin{align}
{\bf V}_{0} \subset {\bf V}_{1} \subset \ldots \subset {\bf V}_{n_\ell} \nonumber \\
\text{Q}_{0} \subset \text{Q}_{1} \subset \ldots \subset \text{Q}_{n_\ell} \nonumber
\end{align}
and:
\begin{equation}
\nabla \cdot {\bf V}_{\ell} = \text{Q}_{\ell} \nonumber
\end{equation}
for each $\ell = 0, \ldots, n_\ell$.  Level $\ell = 0$ corresponds to the coarsest mesh while level $\ell = n_\ell$ corresponds to the finest mesh.  The action of knot insertion not only allows for B-spline refinement, but it also provides the intergrid transfer operators associated with a geometric multigrid method.  Namely, we can build prolongation operators:
\[
P^v_{\ell}: {\bf V}_{\ell} \rightarrow {\bf V}_{\ell+1} \hspace{15pt} \textup{and} \hspace{15pt} P^q_{\ell}: \text{Q}_{\ell} \rightarrow \text{Q}_{\ell+1}
\]
for $\ell = 0, \ldots, n_\ell - 1$ using the construction provided in Subsection \ref{subsec:refinement}.  We encode the action of these prolongation operators in the matrices ${\bf P}^v_\ell$ and ${\bf P}^q_\ell$ such that the following refinement operations hold:
\begin{align}
{\bf N}^v_{i,\ell}(\bm{\xi}) &= \sum_{j} [{\bf P}^v_\ell]_{ji} {\bf N}^v_{j,\ell+1}(\bm{\xi}) \nonumber \\
N^q_{i,\ell}(\bm{\xi}) &= \sum_{j} [{\bf P}^q_\ell]_{ji} N^q_{j,\ell+1}(\bm{\xi}) \nonumber
\end{align}
for $\ell = 0, \ldots, n_\ell - 1$ where $\left\{ {\bf N}^v_{i,\ell} \right\}_{i=1}^{n_{v,\ell}}$ and $\left\{ N^q_{i,\ell} \right\}_{i=1}^{n_{q,\ell}}$ denote the velocity and pressure B-spline basis functions associated with level $\ell$.  Moreover, the degrees of freedom associated with pressure and velocity fields on the $\ell^{\textup{th}}$ level can be transferred to the the $(\ell+1)^{\textup{st}}$ level via the expressions:
\begin{align}
{\bf u}_{\ell+1} &= {\bf P}^v_\ell {\bf u}_{\ell} \nonumber \\
{\bf p}_{\ell+1} &= {\bf P}^q_\ell {\bf p}_{\ell} \nonumber
\end{align}
As is standard with a Galerkin formulation, restriction operators are constructed as the adjoint or transpose of the prolongation operators, namely $R^v_{\ell+1} = \left(P^v_{\ell}\right)^{*}$, $R^q_{\ell+1} = \left(P^q_{\ell}\right)^{*}$, ${\bf R}^v_{\ell+1} = \left({\bf P}^v_{\ell}\right)^{T}$, and ${\bf R}^q_{\ell+1} = \left({\bf P}^q_{\ell}\right)^{T}$ for $\ell = 0, \ldots, n_\ell - 1$.  Finally, we define a prolongation matrix ${\bf P}_{\ell}$ for the full group variable such that:
\begin{align}
{\bf U}_{\ell+1} = \left[ \begin{array}{c}
{\bf u}_{\ell+1} \\
{\bf p}_{\ell+1}
\end{array} \right] = \left[ \begin{array}{cc}
{\bf P}^v_\ell & {\bf 0} \\
{\bf 0} & {\bf P}^q_\ell
\end{array} \right] \left[ \begin{array}{c}
{\bf u}_{\ell} \\
{\bf p}_{\ell}
\end{array} \right] =
{\bf P}_\ell {\bf U}_{\ell} \nonumber
\end{align}
for $\ell = 0, \ldots, n_\ell-1$.  The corresponding restriction matrix for level $\ell$ is given by ${\bf R}_{\ell+1} = \left({\bf P}_{\ell}\right)^{T}$.

We need a few more ingredients before stating the multigrid V-cycle algorithm for our discretization scheme.  First of all, we need to form the matrix system associated with the finest level, ${\bf K}{\bf U} = {\bf F}$.  We then form the system matrices associated with coarser levels via the relation ${\bf K}_{\ell} = {\bf R}_{\ell+1} {\bf K}_{\ell+1} {\bf P}_{\ell}$ for $\ell = 0, \ldots, n_{\ell}-1$ where ${\bf K}_{n_\ell} = {\bf K}$.  Second of all, we need to choose a smoother for each level $\ell$ which we encode in a smoothing matrix ${\bf S}_{\ell}$, and and we need to select a number of pre-smoothing steps $\nu_1$ and post-smoothing steps $\nu_2$.  Third of all, we need to choose a suitable initial guess ${\bf U}$ for the solution on the finest level.  Then, one V-cycle corresponds to a single call of the form $\text{MGV}(n_\ell,{\bf U},{\bf F})$ to the recursive function defined below \cite{Briggs00}.

\begin{algorithm}
\caption{Multigrid V-Cycle Algorithm}
\begin{algorithmic}[1]
\Function{MGV}{$\ell,{\bf U},{\bf F}$} 
 \If{$\ell = 0$}
   \State{${\bf U} = {\bf K}_{\ell}^{-1} {\bf F}$} \Comment{Exact System Solution}
 \Else
   \For{$i = 1$ to $\nu_1$}
        \State{${\bf U} \leftarrow {\bf U} + {\bf S}_{\ell}^{-1} \left( {\bf F} - {\bf K}_{\ell} {\bf U} \right)$} \Comment{Pre-Smoothing}
    \EndFor
    \State{${\bf G} = {\bf R}_{\ell} \left( {\bf F} - {\bf K}_{\ell} {\bf U} \right)$} \Comment{Restriction of Residual to Coarse Grid}
    \State{$\Delta {\bf U} = {\bf 0}$} \Comment{Coarse Grid Correction Initialization}
    \State \Call{MGV}{$\ell-1,\Delta {\bf U},{\bf G}$} \Comment{Coarse Grid Correction Evaluation}
    \State{${\bf U} \leftarrow  {\bf U} + {\bf P}_{\ell-1}\Delta {\bf U}$} \Comment{Update of Solution}
    \For{$i = 1$ to $\nu_2$}
        \State{${\bf U} \leftarrow {\bf U} + {\bf S}_{\ell}^{-1} \left( {\bf F} - {\bf K}_{\ell} {\bf U} \right)$} \Comment{Post-Smoothing}
    \EndFor
 \EndIf
\EndFunction
\end{algorithmic}
\end{algorithm}

\noindent Note that the solution ${\bf U}$ is updated within the algorithm stated above.  Hence, additional V-cycles simply correspond to additional calls of the form $\text{MGV}(n_\ell,{\bf U},{\bf F})$.

\subsection{Overlapping Schwarz Smoothers on Compatible Subdomains}

At this juncture, we have not yet determined what smoother to employ.  We turn to the use of overlapping Schwarz smoothers \cite{Dolean15} with specially chosen overlapping subdomains which respect the underlying topological structure of the generalized Stokes and Oseen problems \cite{Kanschat16}.  Namely, for each level $\ell$, we define a collection of subdomains $\{ \Omega_{i,\ell} \}_i$ where each individual subdomain is defined as the support of a discrete streamfunction basis function in the two-dimensional setting:
\[
\Omega_{i,\ell} := \text{supp}\left(N^{\psi}_{i,\ell}\right)
\]
and a discrete vector potential basis function in the three-dimensional setting:
\[
\Omega_{i,\ell} := \text{supp}\left({\bf N}^{\psi}_{i,\ell}\right)
\]
It is easily seen that the subdomains form a cover of the physical domain, that is:
\[
\overline{\Omega} = \bigcup_i \Omega_{i,\ell}
\]
For each subdomain, we define discrete velocity and pressure subspaces ${\bf V}_{i,\ell} \subset {\bf V}_\ell$ and $\text{Q}_{i,\ell} \subset \text{Q}_\ell$, respectively, as
\[
{\bf V}_{i,\ell} := \left\{ {\bf v}_h \in {\bf V}_{\ell}: \text{supp} \ {\bf v}_h \subseteq \Omega_{i,\ell} \right\} \hspace{15pt} \text{and} \hspace{15pt} \text{Q}_{i,\ell} := \left\{ q_h \in \text{Q}_{\ell} : \text{supp} \ q_h \subseteq \Omega_{i,\ell} \right\}
\]
In the two-dimensional setting, we define a discrete streamfunction subspace for each subdomain as:
\[
\Psi_{i,\ell} := \left\{ \psi_h \in \Psi_{\ell}: \text{supp} \ \psi_h \subseteq \Omega_{i,\ell} \right\}
\]
and in the three-dimensional setting, we define a discrete vector potential subspace for each subdomain as:
\[
\bm{\Psi}_{i,\ell} := \left\{ \bm{\psi}_h \in \bm{\Psi}_{\ell}: \text{supp} \ \bm{\psi}_h \subseteq \Omega_{i,\ell} \right\}
\]
The degrees of freedom associated with all of the aforementioned subspaces are illustrated in the two-dimensional case in Figure \ref{fig:subdomains} for two separate subdomains.

\begin{figure}[t!]
\centering
\includegraphics[scale=.75]{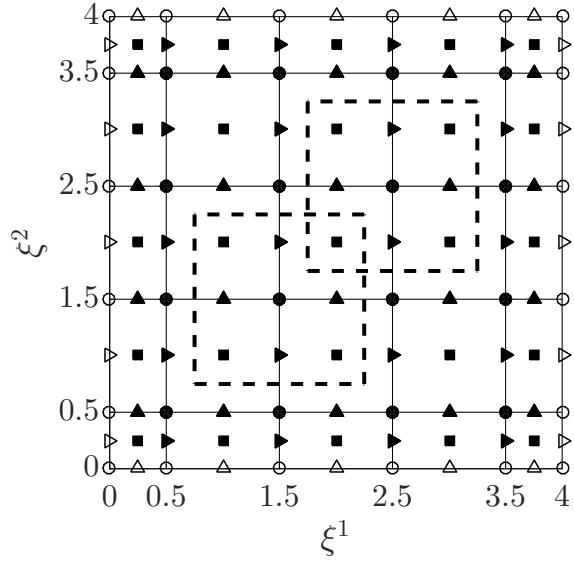}
\caption{The degrees of freedom associated with two example subdomains in the parametric domain $\hat{\Omega}$. Filled circles (\ding{108}) denote streamfunction degrees of freedom, triangles ($\blacktriangleright$, $\blacktriangle$) denote velocity degrees of freedom, and filled squares (\ding{110}) denote pressure degrees of freedom. Hollow markers indicate degrees of freedom associated with boundary conditions.}
\label{fig:subdomains}
\end{figure}

By construction, $\dim{\left(\Psi_{i,\ell}\right)} = 1$ in the two-dimensional setting and  $\dim{\left(\bm{\Psi}_{i,\ell}\right)} = 1$ in the three-dimensional setting. Moreover, $\dim{\left({\bf V}_{i,\ell}\right)} = 4$ and $\dim{\left(\text{Q}_{i,\ell}\right)} = 3$ in both the two- and three-dimensional settings\footnote{From Figure \ref{fig:subdomains}, it appears that $\dim{\left(\text{Q}_{i,\ell}\right)} = 4$.  However, the functions in $\text{Q}_{i,\ell}$ must satisfy a zero average constraint, so the dimension is one less than what is observed from the figure.}.  Thus, the subspaces associated with each subdomain form the following exact discrete Stokes complex in the two-dimensional setting:
\begin{align}
\begin{CD}
0 @>>> \Psi_{i,\ell} @>\vec{\nabla}^\perp>> {\bf V}_{i,\ell} @>\vec{\nabla}\cdot>> \text{Q}_{i,\ell} @>>> 0
\end{CD}
\label{eqn:Subdomain2DCD}
\end{align}
and the following exact discrete Stokes complex in the three-dimensional setting:
\begin{align}
\begin{CD}
0 @>>> {\bf \Psi}_{i,\ell} @>\vec{\nabla}\times>> {\bf V}_{i,\ell} @>\vec{\nabla}\cdot>> \text{Q}_{i,\ell} @>>> 0
\end{CD}
\label{eqn:Subdomain3DCD}
\end{align}
Thus, as previously suggested, our choice of subdomains indeed respects the underlying topological structure of the generalized Stokes and Oseen problems.

With our subdomains defined, we can now describe our choice of smoothers, namely additive and multiplicative Schwarz smoothers using our prescribed subdomains.  In this direction, let ${\bf E}^v_{i,\ell}$ and ${\bf E}^q_{i,\ell}$ denote the velocity and pressure subdomain restriction matrices for a given level $\ell$ and subdomain $i$ that take the full set of velocity and pressure degrees of freedom associated with level $\ell$ and map them to the set of pressure and velocity degrees of freedom associated with the subdomain $\Omega_{i,\ell}$.  Additionally, let:
\[
{\bf E}_{i,\ell}:= \left[ \begin{array}{cc}
{\bf E}^v_{i,\ell} & {\bf 0} \\
{\bf 0} & {\bf E}^q_{i,\ell}
\end{array} \right]
\]
denote the subdomain restriction matrix for a given level $\ell$ and subdomain $i$ for the full group variable.  Then, the action of the additive Schwarz smoother is defined through:
\begin{equation}
\label{eqn:ASM}
	{\bf S}^{-1}_\ell = \eta\left( \sum_i {\bf E}_{i,\ell}^T ({\bf E}_{i,\ell} {\bf K}_\ell {\bf E}_{i,\ell}^T)^{-1} {\bf E}_{i,\ell} \right)
\end{equation}
where $\eta \in (0,1)$ is a suitably chosen scaling factor \cite{Kanschat15}, while the action of the multiplicative Schwarz smoother is defined through:
\begin{equation}
\label{eqn:MSM}
    {\bf S}^{-1}_\ell = \left[ {\bf I} - \prod_i \left( {\bf I} - {\bf E}_{i,\ell}^T ({\bf E}_{i,\ell} {\bf K}_\ell {\bf E}_{i,\ell}^T)^{-1} {\bf E}_{i,\ell} {\bf K}_\ell \right) \right] {\bf K}^{-1}_\ell
\end{equation}
The additive and multiplicate Schwarz smoothers are generalizations of the classical Jacobi and Gauss-Seidel smoothers, and indeed they can be implemented in an efficient, iterative manner.  For both of these smoothers, a sequence of local matrix problems of the form:
\begin{equation}
\label{eqn:localproblem}
{\bf K}_{i,\ell} {\bf U}_{i,\ell} = {\bf F}_{i,\ell}
\end{equation}
where ${\bf K}_{i,\ell} = {\bf E}_{i,\ell} {\bf K}_\ell {\bf E}_{i,\ell}^T$ must be solved.  It is easily seen that:
\[
{\bf K}_{i,\ell} = \left[ \begin{array}{cc}
    {\bf A}_{i,\ell} & -{\bf B}_{i,\ell} \\
    {\bf B}_{i,\ell}^T & {\bf 0} 
\end{array} \right]
\]
where:
\begin{align}
{\bf A}_{i,\ell} &= {\bf E}^v_{i,\ell} {\bf A}_\ell \left({\bf E}^v_{i,\ell}\right)^T \nonumber \\
{\bf B}_{i,\ell} &= {\bf E}^v_{i,\ell} {\bf A}_\ell \left({\bf E}^q_{i,\ell}\right)^T \nonumber
\end{align}
Thus, with both the additive and multiplicative Schwarz smoothers, a discrete generalized Stokes or Oseen problem is solved for each subdomain.  In the next subsection, we further clarify this interpretation in a variational setting.  With the additive Schwarz smoother, the subdomain problems are solved independently, and their respective solutions are summed together and multiplied through by a scaling factor as indicated above.  With the multiplicative Schwarz smoother, the subdomain problems are solved in a sequential fashion in analogy with the Gauss-Seidel smoother.

\subsection{Preservation of the Divergence-free Constraint}

Now that we have presented our choice of smoother, we demonstrate that our geometric multigrid methodology preserves the divergence-free constraint on the velocity field.  Provided that the initial guess for the V-cycle algorithm satisfies the divergence-free constraint, it is sufficient to show that each smoothing step provides velocity updates that are divergence-free.  One application of either the additive or multiplicative Schwarz smoother at level $\ell$ is akin to solving a collection of local subdomain problems of the form: Find $\delta {\bf u}_{i,\ell} \in {\bf V}_{i,\ell}$ and $\delta p_{i,\ell} \in \text{Q}_{i,\ell}$ such that:
\begin{equation}
\label{eqn:subdomain}
a_h({\bf v}_h,\delta {\bf u}_{i,\ell}) - b({\bf v}_h,\delta p_{i,\ell}) + b(\delta {\bf u}_{i,\ell},q_h) = \ell({\bf v}_h) - a_h({\bf v}_h,{\bf u}_h) + b({\bf v}_h,p_h) - b({\bf u}_h,q_h) 
\end{equation}
for all ${\bf v}_h \in {\bf V}_{i,\ell}$ and $q_h \in \text{Q}_{i,\ell}$ where ${\bf u}_h$ and $p_h$ are the approximate discrete velocity and pressure solutions.  For the additive Schwarz smoother, the approximate discrete velocity and pressure solutions are updated following the solution of all of the local problems according to:
\[
\begin{aligned}
{\bf u}_h & \leftarrow {\bf u}_h + \eta \left(\sum_i \delta {\bf u}_{i,\ell}\right) \\
p_h & \leftarrow p_h + \eta \left(\sum_i \delta p_{i,\ell}\right)
\end{aligned}
\]
while for the multiplicative Schwarz smoother, the approximate discrete velocity and pressure solutions are updated following the solution of each individual local problem according to:
\[
\begin{aligned}
{\bf u}_h & \leftarrow {\bf u}_h + \delta {\bf u}_{i,\ell} \\
p_h & \leftarrow p_h + \delta p_{i,\ell}
\end{aligned}
\]
For each subdomain problem, if the approximate discrete velocity solution is divergence-free, it holds that:
\[
b(\delta {\bf u}_{i,\ell},q_h) = 0
\]
for all $q_h \in \text{Q}_{i,\ell}$.  Since $\text{Q}_{i,\ell} = \nabla \cdot {\bf V}_{i,\ell}$, we can select $q_h = \nabla \cdot \delta {\bf u}_{i,\ell}$ to find:
\[
\| \nabla \cdot \delta {\bf u}_{i,\ell} \|^2_{L^2(\Omega)} = b(\delta {\bf u}_{i,\ell},q_h) = 0
\]
and thus the solution to the local problem is also divergence-free.  Thus, if the initial guess for the V-cycle algorithm satisfies the divergence-free constraint, each subsequent application of the Schwarz smoother at any given level $\ell$ preserves the divergence-free constraint as well.

\subsection{Efficacy of the Structure-Preserving Geometric Multigrid Methodology}

We conclude here with a short discussion of the efficacy of our geometric multigrid methodology.  We restrict our discussion to the three-dimensional setting without loss of generality.  Recall that the spaces $\bm{\Psi}_{i,\ell}$, ${\bf V}_{i,\ell}$, and $\text{Q}_{i,\ell}$ form a discrete Stokes complex for a given level $\ell$ and subdomain $i$.  Thus, we can express the velocity solution $\delta {\bf u}_{i,\ell} \in {\bf V}_{i,\ell}$ to \eqref{eqn:subdomain} in terms of the curl of a vector potential $\delta \bm{\psi}_{i,\ell} \in \bm{\Psi}_{i,\ell}$ provided the velocity is divergence-free, and this vector potential can be obtained via the reduced subdomain problem: Find $\delta \bm{\psi}_{i,\ell} \in \bm{\Psi}_{i,\ell}$ such that:
\begin{equation}
\label{eqn:subdomain_reduced}
a_h(\nabla \times \bm{\zeta}_h,\nabla \times \delta \bm{\psi}_{i,\ell}) = \ell(\nabla \times \bm{\zeta}_h) - a_h(\nabla \times \bm{\zeta}_h,{\bf u}_h)
\end{equation}
for all $\bm{\zeta}_h \in \bm{\Psi}_{i,\ell}$.  This is precisely the subdomain problem associated with the global semi-elliptic generalized Maxwell problem with hyperresitivity \cite{Biskamp97}: Find $\bm{\psi} \in \bm{\Psi}_h$ such that:
\begin{equation}
\label{eqn:global_reduced}
a_h(\nabla \times \bm{\zeta}^h,\nabla \times \bm{\psi}^h) = \ell(\nabla \times \bm{\zeta}^h)
\end{equation}
for all $\bm{\zeta}^h \in \bm{\Psi}_h$.  It is known that a geometric multigrid methodology based on the use of Schwarz smoothers posed on structure-preserving subdomains is optimally convergent for Maxwell problems \cite{Arnold00}.  Consequently, we can expect that at least the discrete velocity solutions will converge in our approach.

\section{Numerical Results}
\label{sec:Results}

We now present a series of numerical tests illustrating the effectiveness of our proposed geometric multigrid methodology. Each of the tests correspond to problems with homogeneous Dirichlet boundary conditions applied along the entire domain boundary.  In our discretization scheme, no-penetration boundary conditions are enforced strongly and no-slip boundary conditions are enforced weakly using a penalty constant of $C_I = 4(p-1)$ where $p$ is the polynomial degree which is taken to be equal in each parameteric direction.  It should be noted that $p$ refers to the polynomial degree of the discrete streamfunction space in the two-dimensional case and the discrete scalar potential space in the three-dimensional case.  Hence, for $p = 2$, the discrete pressure fields are piecewise bilinear/trilinear B-splines rather than piecewise biquadratic/triquadratic B-splines.  Maximally smooth B-splines defined on uniform knot vectors are utilized throughout.

For all of the following tests, we define convergence as the number of V-cycles required to reduce the initial residual by a factor of $10^{6}$.  We always initialize the V-cycle algorithm using a random initial guess which satisfies the divergence-free constraint on the velocity field.  For each V-cycle, one pre-smoothing and two post-smoothing steps are employed using either the multiplicative or additive Schwarz smoother.  For the additive Schwarz smoother, a scaling factor of $\eta = 0.5$ is employed.

For all the problems presented here, a single element is used for the coarsest mesh and we investigate the convergence behavior for various levels of refinement.  We report on the convergence behavior of our method for both the generalized Stokes and Oseen problems as well as a selection of different problem parameters, polynomial degrees, domain geometries, and number of spatial dimensions (2D and 3D).  With respect to problem parameters, we consider the ratios between reaction and diffusion and advection and diffusion, which we express through a Damk\"ohler number ($Da$) and a Reynolds number ($Re$).  We define these numbers as:
\[
Da = \frac{\sigma L^2}{\nu} \quad \text{and} \quad Re = \frac{|{\bf a}|L}{\nu}.
\]
where $L$ is a characteristic length scale which is taken to be one throughout.

\subsection{Two-dimensional generalized Stokes flow in a square domain}

We first consider a two-dimensional generalized Stokes problem posed on the square domain $(0,1)^2$.  In particular, we consider a forcing:
\begin{equation}
\begin{aligned}
\label{eq:f_square}
\bf{f} = \sigma \bf{u} - \nu \Delta \bf{u} + \nabla p
\end{aligned}
\end{equation}
corresponding to the manufactured solution \cite{Buffa11}:
\begin{equation}
\begin{aligned}
\label{eq:u_square}
\bf{u} = \left[ \begin{array}{c}
	2e^x (-1 + x)^2 x^2 (y^2 - y)(-1 + 2y) \\
	(-e^x (-1 + x) x (-2 + x (3 + x))(-1 + y)^2 y^2 \end{array} \right]
\end{aligned}
\end{equation}
\begin{equation}
\begin{aligned}
\label{eq:p_square}
p & = (-424 + 156e + (y^2 - y) (-456 + e^x (456 + x^2 (228 - 5 (y^2 - y)) \\
& + 2x (-228 + (y^2 -y)) + 2x^3 (-36 + (y^2 - y)) + x^4 (12 + (y^2-1)))))
\end{aligned}
\end{equation}
The velocity field associated with this exact solution is plotted in Figure \ref{fig:2d_solution_square}.

\begin{figure}[b!]
\centering
\includegraphics[scale=0.33]{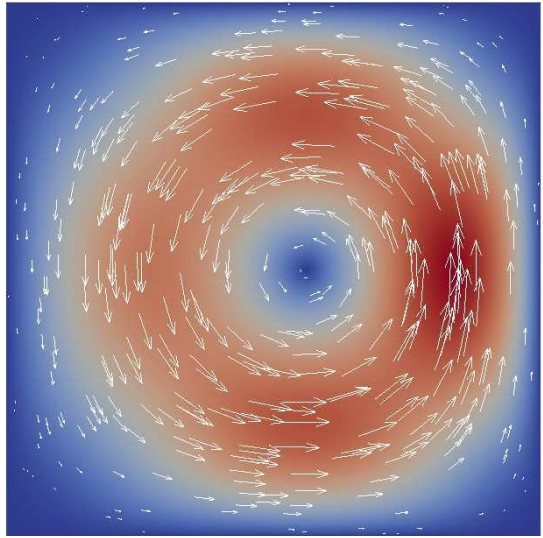}
\captionsetup{format = hang}
\caption{Velocity field for the unit square generalized Stokes problem.}
\label{fig:2d_solution_square}
\end{figure}

\begin{table}[h]
\caption{Number of V(1,2) cycles required for convergence for the unit square generalized Stokes problem using the multiplicative Schwarz smoother and $p = 2, 3$.}
\label{table:2D-DS-square}
\centering
\tabsize
\begin{tabular}{llcclcc}
\toprule
 & \multicolumn{3}{c}{$p = 2$} & \multicolumn{3}{c}{$p = 3$} \\ 
 \midrule
$n_\ell$ & DOFs & $Da = 1$ & $Da = 1000$ & DOFs & $Da = 1$ & $Da = 1000$ \\
\midrule
1 & 16 & 3 & 2 & 25 & 6 & 6 \\
2 & 36 & 5 & 4 & 49 & 12 & 14 \\
3 & 100 & 5 & 5 & 121 & 13 & 13 \\
4 & 324 & 6 & 5 & 361 & 12 & 12 \\
5 & 1156 & 6 & 5 & 1225 & 13 & 13 \\
6 & 4356 & 7 & 6 & 4489 & 13 & 13 \\
7 & 16900 & 7 & 6 & 17161 & 13 & 13 \\
8 & 66564 & 7 & 6 & 67081 & 13 & 13 \\
9 & 264196 & 7 & 7 & 265225 & 13 & 13 \\
10 & 1052676 & 7 & 7 & 1054729 & 13 & 13 \\
\bottomrule
\end{tabular}
\end{table}

\begin{table}[h]
\caption{Number of V(1,2) cycles required for convergence for the unit square generalized Stokes problem using the additive Schwarz smoother and $p = 2$.}
\label{table:2D-DS-square-AS}
\centering
\tabsize
\begin{tabular}{llcc}
\toprule
$n_\ell$ & DOFs & $Da = 1$ & $Da = 1000$ \\
\midrule
1 & 16 & 5 & 4 \\
2 & 36 & 12 & 9 \\
3 & 100 & 15 & 14 \\
4 & 324 & 16 & 15 \\
5 & 1156 & 16 & 16 \\
6 & 4356 & 16 & 16 \\
7 & 16900 & 16 & 16 \\
8 & 66564 & 16 & 16 \\
9 & 264196 & 16 & 16 \\
10 & 1052676 & 16 & 16 \\
\bottomrule
\end{tabular}
\end{table}

We first present convergence results for our multigrid method using the multiplicative Schwarz smoother and polynomial degrees $p = 2$ and $p = 3$ in Table \ref{table:2D-DS-square}.  It is clear that for a given polynomial order, the convergence behavior is robust with respect to both the number of levels of refinement and the problem parameters.  We also observe that as polynomial order is increased, the convergence behavior deteriorates, albeit slightly.  This is consistent with previously observed behavior for isogeometric analysis \cite{Gahalaut13, Hofreither15, Hofreither17}.

We next present convergence results for our multigrid method using the additive Schwarz smoother and polynomial degree $p = 2$ in Table \ref{table:2D-DS-square-AS}.  As expected, overall convergence is slower with additive Schwarz than with multiplicative Schwarz, although in this case the method is still robust with regard to both the number of levels of refinement and the problem parameters.

\subsection{Two-dimensional generalized Stokes flow in a quarter annulus}

We next consider a two-dimensional generalized Stokes problem posed on a quarter annulus.  The domain is described in Figure \ref{fig:annulus} where $r_i = 0.075$ and $r_o = 0.225$.  We consider a manufactured solution achieved by mapping the solution presented in \eqref{eq:u_square}-\eqref{eq:p_square} to the quarter-annulus domain using a quadratic rational B\'{e}zier parametric mapping and appropriate push-forward operators.  The velocity field associated with the exact solution are also plotted in Figure \ref{fig:annulus}.

\begin{figure}[b!]
\centering
\includegraphics[scale=0.33]{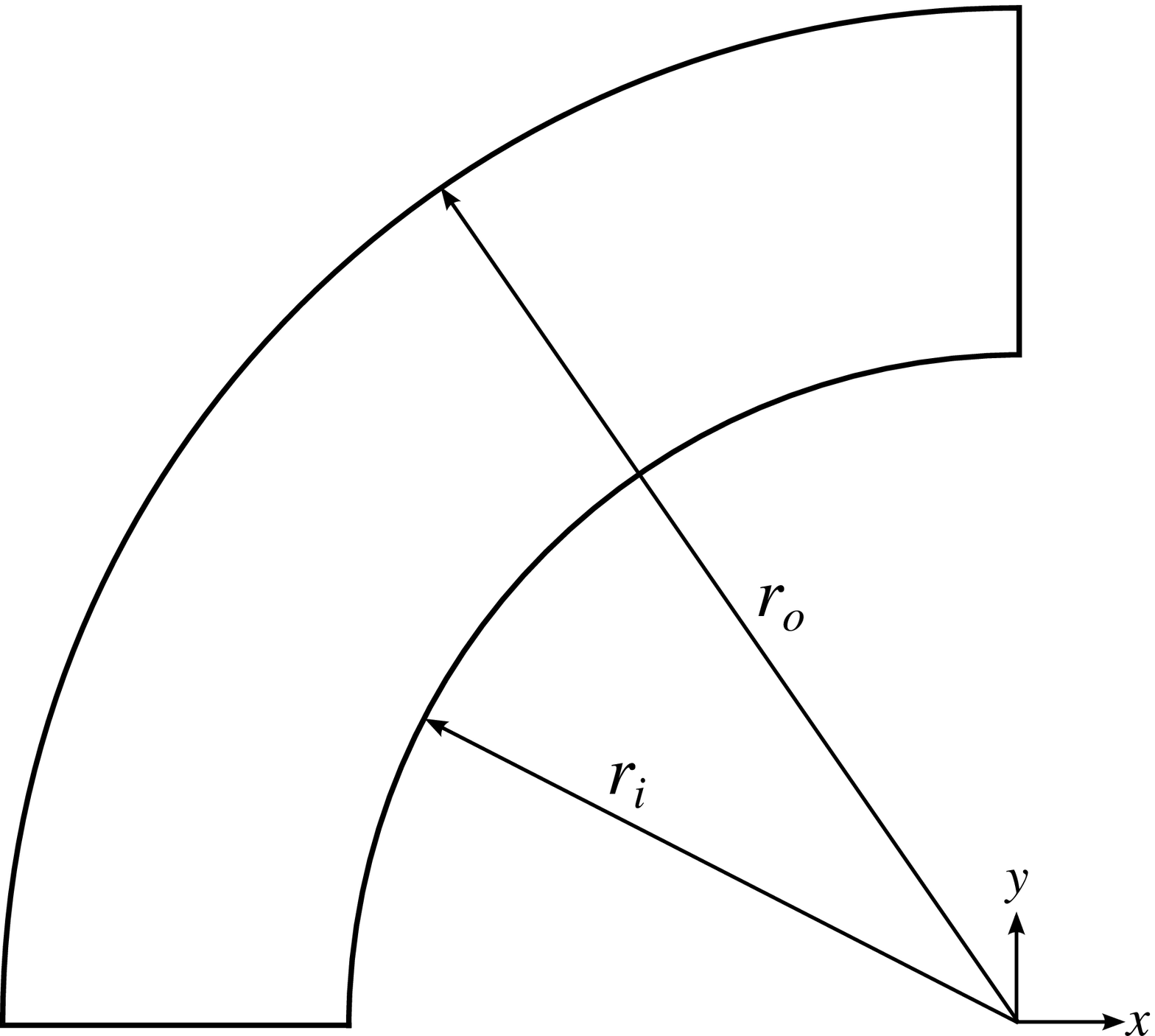} \includegraphics[scale=0.26]{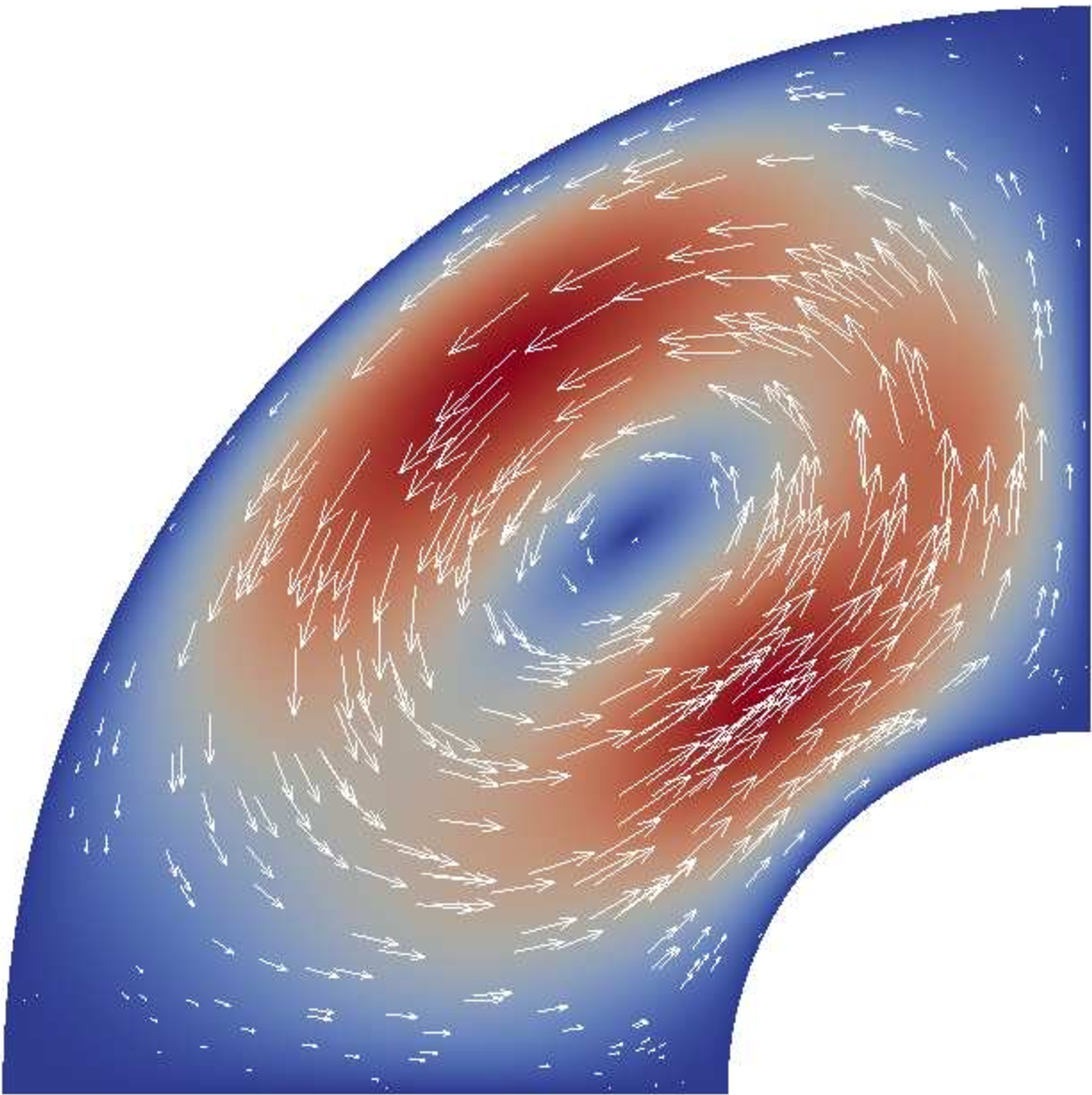}
\captionsetup{format = hang}
\caption{Domain description (left) and velocity field (right) for the quarter annulus generalized Stokes problem.}
\label{fig:annulus}
\end{figure}

We present convergence results for our multigrid method using the multiplicative Schwarz smoother and polynomial degree $p = 2$ in Table \ref{table:2D-DS-annulus}.  Compared with the square domain, the number of V-cycles required for convergence is larger.  However, the method is still robust with respect to both the number of levels of refinement and the problem parameters.

\begin{table}[t!]
\caption{Number of V(1,2) cycles required for convergence for the quarter annulus generalized Stokes problem using the multiplicative Schwarz smoother and $p = 2$.}
\label{table:2D-DS-annulus}
\centering
\tabsize
\begin{tabular}{llcc}
\toprule
$n_\ell$ & DOFs & $Da = 1$ & $Da = 1000$ \\
\midrule
1 & 16 & 2 & 2 \\
2 & 36 & 3 & 3 \\
3 & 100 & 8 & 7 \\
4 & 324 & 16 & 15 \\
5 & 1156 & 24 & 24 \\
6 & 4356 & 26 & 26 \\
7 & 16900 & 31 & 31 \\
8 & 66564 & 34 & 34 \\
9 & 264196 & 34 & 34 \\
10 & 1052676 & 34 & 34 \\
\bottomrule
\end{tabular}
\end{table}

\subsection{Three-dimensional generalized Stokes flow in a cube domain}
\begin{figure}[b!]
\centering
\includegraphics[scale=0.2]{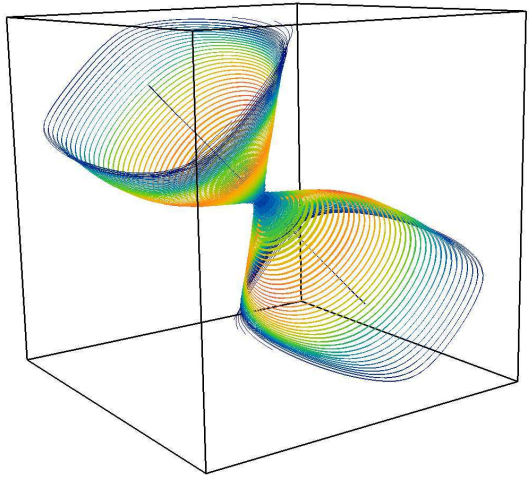}
\captionsetup{format = hang}
\caption{Streamlines colored by velocity magnitude for the unit cube generalized Stokes problem.}
\label{fig:3d_solution_cube}
\end{figure}

We next consider a three-dimensional generalized Stokes problem posed on the unit cube $(0,1)^3$.  In particular, we consider a forcing:
\begin{equation}
\begin{aligned}
\label{eq:f_cube}
\bf{f} = \sigma \bf{u} - \nu \Delta \bf{u} + \nabla p
\end{aligned}
\end{equation}
corresponding to the manufactured solution \cite{Evans13_1}:
\begin{equation}
\begin{aligned}
\label{eq:u_cube}
\bf{u} = \nabla \times \bm{\psi}
\end{aligned}
\end{equation}
\begin{equation}
\begin{aligned}
\label{eq:phi_cube}
\bm{\psi} = \left[ \begin{array}{c}
	x(x-1)y^2(y-1)^2z^2(z-1)^2 \\
	0 \\
	x^2(x-1)^2y^2(y-1)^2z(z-1) \end{array} \right]
\end{aligned}
\end{equation}
\begin{equation}
\begin{aligned}
\label{eq:p_cube}
p = \sin(\pi x) \sin(\pi y) - \frac{4}{\pi^2}
\end{aligned}
\end{equation}
Streamlines colored by velocity magnitude associated with the exact solution are plotted in Figure \ref{fig:3d_solution_cube}.

We present convergence results for our multigrid method using the multiplicative Schwarz smoother and polynomial degrees $p = 2$ and $p = 3$ in Table \ref{table:3D-DS-cube}.  Convergence appears to be much quicker in the three-dimensional setting.  Notably, one V-cycle appears to be sufficient to reduce the residual by six orders of magnitude for a sufficient number of levels for both $p = 2$ and $p = 3$ and irrespective of the Damk\"ohler number.  We believe this may be due to the fact each velocity degree of freedom is updated twice as many times in each iteration of the smoother in the three-dimensional case as compared to the two-dimensional case.

\begin{table}[t!]
\caption{Number of V(1,2) cycles required for convergence for the unit cube generalized Stokes problem using the multiplicative Schwarz smoother and $p = 2, 3$.}
\label{table:3D-DS-cube}
\centering
\tabsize
\begin{tabular}{llcclcc}
\toprule
& \multicolumn{3}{c}{$p = 2$} & \multicolumn{3}{c}{$p = 3$} \\ 
 \midrule
$n_\ell$ & DOFs & $Da = 1$ & $Da = 1000$ & DOFs & $Da = 1$ & $Da = 1000$ \\
\midrule
1 & 144 & 1 & 1 & 300 & 2 & 4 \\
2 & 540 & 1 & 1 & 882 & 1 & 1 \\
3 & 2700 & 2 & 2 & 3630 & 1 & 1 \\
4 & 16524 & 1 & 1 & 19494 & 1 & 1 \\
5 & 114444 & 1 & 1 & 124950 & 1 & 1 \\
6 & 849420 & 1 & 1 & 888822 & 1 & 1 \\
\bottomrule
\end{tabular}
\end{table}

\subsection{Three-dimensional generalized Stokes flow in a hollow cylinder section}
The final generalized Stokes problem considered in this paper is a three-dimensional problem posed on a hollow cylinder section.  The domain for this problem is simply the quarter annulus from before extruded in the $z$-direction by a depth of $d = 0.1$.  We consider a manufactured solution achieved by mapping the solution presented in \eqref{eq:u_cube}-\eqref{eq:p_cube} to the hollow cylinder section domain using a quadratic rational B\'{e}zier parametric mapping and appropriate push-forward operators. Streamlines colored by velocity magnitude associated with the exact solution are plotted in Figure \ref{fig:3d_solution_annulus}.

\begin{figure}[b!]
\centering
\includegraphics[scale=0.2]{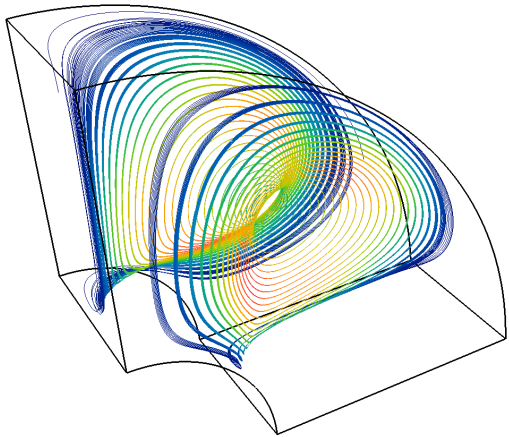}
\captionsetup{format = hang}
\caption{Streamlines colored by velocity magnitude for the hollow cylinder generalized Stokes problem.}
\label{fig:3d_solution_annulus}
\end{figure}

We present convergence results for our multigrid method using the multiplicative Schwarz smoother and polynomial degree $p = 2$ in Table \ref{table:3D-DS-hollow}.  Incredibly, one V-cycle again appears to be sufficient to reduce the residual by six orders of magnitude for a sufficient number of levels irrespective of the Damk\"ohler number.

\begin{table}[t!]
\caption{Number of V(1,2) cycles required for convergence for the hollow cylinder generalized Stokes problem using the multiplicative Schwarz smoother and $p = 2$.}
\label{table:3D-DS-hollow}
\centering
\tabsize
\begin{tabular}{llcc}
\toprule
$n_\ell$ & DOFs & $Da = 1$ & $Da = 1000$\\
\midrule
1 & 144 & 1 & 1 \\
2 & 540 & 1 & 2 \\
3 & 2700 & 2 & 2 \\
4 & 16524 & 1 & 1 \\
5 & 114444 & 1 & 1 \\
6 & 849420 & 1 & 1 \\
\bottomrule
\end{tabular}
\end{table}

\subsection{Two-dimensional generalized Oseen flow in a square domain}

We now turn our attention to the generalized Oseen problem.  We first consider a two-dimensional generalized Oseen problem posed on the square domain $(0,1)^2$.  We manufacture a solution with a forcing:
\begin{align}
\bf{f} = \sigma \bf{u} + \bf{a} \cdot \nabla \bf{u} - \nu \Delta \bf{u} + \nabla p
\end{align}
where ${\bf u}$ and $p$ are defined as in \eqref{eq:u_square}-\eqref{eq:p_square} such that the resulting solution is the same as the unit square generalized Stokes problem.  Note that the advection velocity is taken to be the manufatured velocity field.

We present convergence results for our multigrid method using the multiplicative Schwarz smoother, polynomial degree $p = 2$, and various Reynolds and Damk\"ohler numbers in Table \ref{table:2D-Oseen}.  When Reynolds number is low, the advection terms become negligible, and thus the method performs as it did on the the 2D generalized Stokes problem.  As the Reynolds number is increased, the advection term becomes more significant.  In this case, we have observed favorable convergence behavior as long as the Damk\"ohler number is at least as large as the Reynolds number.  When the system becomes advection-dominated, on the other hand, the multigrid method fails to converge.  We expect that improved results may be obtained through the use of an alternative smoother which respects the directionality of the advection velocity.

\subsection{Three-dimensional generalized Oseen flow in a cube domain}

We conclude by considering a three-dimensional generalized Oseen problem posed on the unit cube $(0,1)^3$.  We manufacture a solution with a forcing:
\begin{align}
\bf{f} = \sigma \bf{u} + \bf{a} \cdot \nabla \bf{u} - \nu \Delta \bf{u} + \nabla p
\end{align}
where ${\bf u}$ and $p$ are defined as in \eqref{eq:u_cube}-\eqref{eq:p_cube} such that the resulting solution is the same as the unit cube generalized Stokes problem.  As with the two-dimensional generalized Oseen problem, the advection velocity is take to be the manufactured velocity field.

We present convergence results for our multigrid method using the multiplicative Schwarz smoother, polynomial degree $p = 2$, and various Reynolds and Damk\"ohler numbers in Table \ref{table:3D-Oseen}.  The same trends that were observed for the two-dimensional case are observed here as well.  Namely, when the system is not advection dominated, we achieve excellent convergence behavior.  Also, as was the case with the generalized Stokes flow, the three-dimensional case exhibits improved convergence as compared to the two-dimensional case.

\begin{table}[t]
\caption{Number of V(1,2) cycles required for convergence for the unit square generalized Oseen problem using the multiplicative Schwarz smoother and $p = 2$.}
\label{table:2D-Oseen}
\centering
\tabsize
\begin{tabular}{llccc}
\toprule
 & & \multicolumn{2}{c}{$Re = 1$} & $Re = 100$ \\ 
\midrule
$n_\ell$ & DOFs & $Da = 1$ & $Da = 1000$ & $Da = 1000$\\
\midrule
1 & 16 & 3 & 2 & 2 \\
2 & 36 & 5 & 4 & 3 \\
3 & 100 & 5 & 5 & 4 \\
4 & 324 & 6 & 5 & 9 \\
5 & 1156 & 6 & 5 & 13 \\
6 & 4356 & 7 & 6 & 15 \\
7 & 16900 & 7 & 6 & 11 \\
8 & 66564 & 7 & 6 & 7 \\
9 & 264196 & 7 & 7 & 7 \\
10 & 1052676 & 7 & 7 & 7 \\
\bottomrule
\end{tabular}
\end{table}

\begin{table}[!h]
\caption{Number of V(1,2) cycles required for convergence for the unit cube generalized Oseen problem using the multiplicative Schwarz smoother and $p = 2$.}
\label{table:3D-Oseen}
\centering
\tabsize
\begin{tabular}{llccc}
\toprule
& & \multicolumn{2}{c}{$Re = 1$} & $Re = 100$ \\ 
\midrule
$n_\ell$ & DOFs & $Da = 1$ & $Da = 1000$ & $Da = 1000$\\
\midrule
1 & 144 & 1 & 2 & 2 \\
2 & 540 & 1 & 2 & 2 \\
3 & 2700 & 2 & 3 & 3 \\
4 & 16524 & 1 & 2 & 2 \\
5 & 114444 & 1 & 1 & 3 \\
6 & 849420 & 1 & 1 & 3 \\
\bottomrule
\end{tabular}
\end{table}

\section{Conclusions}
\label{sec:Conclusion}

In this paper, we presented a structure-preserving geometric multigrid methodology for isogeometric compatible discretizations of the generalized Stokes and Oseen problems which relies upon Schwarz-style smoothers in conjunction with specially chosen subdomains.  We proved that our methodology yields a pointwise divergence-free velocity field independent of the number of pre-smoothing steps, post-smoothing steps, grid levels, or cycles in a V-cycle implementation, and we demonstrated the efficiency and robustness of our methodology by numerical example.  Specifically, we found that our methodology exhibits convergence rates independent of the grid resolution and flow parameters for the generalized Stokes problem as well as the generalized Oseen problem provided it is not advection-dominated.  We also discovered that, somewhat surprisingly, our methodology exhibits improved convergence rates in the three-dimensional setting as compared with the two-dimensional setting.

We envision several avenues for future work.  First of all, we plan to conduct a full mathematical analysis of our methodology.  We anticipate that this analysis will largely follow the same program of work as laid out in a recent geometric multigrid paper for divergence-conforming discontinuous Galerkin formulations of Stokes flow \cite{Kanschat15}.  Second, we would like to extend the applicability of our methodology to advection-dominated Oseen problems.  We anticipate the need for upwind-based line smoothers in such a setting \cite{Mavriplis02}.  Third, we plan to extend our methodology to multi-patch geometries and adaptive isogeometric compatible discretizations \cite{Buffa14,Johannessen15}.  Initial results in this area are quite encouraging.  Finally, we plan to extend our methodology to multi-physics problems, including coupled flow transport, fluid-structure, and magnetohydrodynamics.

\section{Acknowledgement}
This material is based upon work supported by the Air Force Office of Scientific Research under Grant No. FA9550-14-1-0113.

\bibliographystyle{wileyj}
\bibliography{references}

\end{document}